  \chardef\forshowkeys=0
  \chardef\refcheck=0
  \chardef\showllabel=0
  \chardef\sketches=0
  \chardef\showcolors=0
  \def\llabel#1{\marginnote{\color{colorcccc}\rm\small(#1)}[-0.0cm]\notag}
 \def\llabel#1{\notag}
\newtheorem{Theorem}{Theorem}[section]
\newtheorem{Corollary}[Theorem]{Corollary}
\newtheorem{Proposition}[Theorem]{Proposition}
\newtheorem{Lemma}[Theorem]{Lemma}
\theoremstyle{definition}
\newtheorem{rema}[Theorem]{Remark}
\def\andand{\text{\quad and \quad}}
\def\vk{v^{(k)}}
\def\vj{v^{(j)}}
\def\vj{v^{(j)}}
\def\inon#1{\hbox{\ \ \ \ \ \ \ }\hbox{#1}}                
\def\inin#1{\inon{in~$#1$}}
   \def\LLL#1#2{L_{\omega}^{#1}H_{x}^{#2}}
   \def\Pas{\indeq\mathbb{P}\text{-a.s.}}
   \def\PP{{\mathbb P}}
   \def\uu{{{u}}}
   \def\startnewsection#1#2{\section{#1}\label{#2}\setcounter{equation}{0}}   
   \def\NNp{{\mathbb N}}
   \def\NNz{{\mathbb N}_0}
    \def\TT{{\mathbb T}}
   \def\WW{{\mathbb W}}
   \def\EE{{\mathbb E}}
   \def\comma{ {\rm ,\quad{}} }            
   \def\commaone{ {\rm ,\quad{}} }         
   \def\fractext#1#2{{#1}/{#2}}
   \def\indeq{\qquad{}}                     
\def\TT{\mathbb T}
\def\tilde{\widetilde}
\def\PP{\mathbb{P}}
\def\div{\mathop{\rm div}\nolimits}
\def\indeq{\quad{}}
  \definecolor{colorcccc}{rgb}{0.7,0.7,0.7}
  \def\colb{\color{black}}
  \definecolor{colorpppp}{rgb}{0.6,0.0,0.1}
  \definecolor{colorgggg}{rgb}{.0,0.4,0.0}
  \definecolor{colorhhhh}{rgb}{0,0.6,0.2}
  \definecolor{colorgray}{rgb}{0.8,0.8,0.8}
  \definecolor{coloroftheorems}{rgb}{0.6,0.0,0.6}
  \definecolor{colorigor}{rgb}{1, 0.2, 0.8}
  \definecolor{amethyst}{rgb}{0.6, 0.4, 0.8}
  \def\cole{\color{coloroftheorems}}
  \def\colu{\color{blue}}
  \definecolor{colororange}{rgb}{0.8,0.2,0}
  \definecolor{colorpurple}{rgb}{0.6,0.0,0.6}
  \definecolor{colorcccc}{rgb}{0,0,0}
  \def\colb{\color{black}}
  \definecolor{colorpppp}{rgb}{0,0,0}
  \definecolor{colorgggg}{rgb}{0,0,0}
  \definecolor{colorhhhh}{rgb}{0,0,0}
  \definecolor{colorgray}{rgb}{0,0,0}
  \definecolor{coloroftheorems}{rgb}{0,0,0}
  \definecolor{colorigor}{rgb}{0,0,0}
  \definecolor{amethyst}{rgb}{0,0,0}
  \def\cole{\color{coloroftheorems}}
  \def\colu{\color{blue}}
  \definecolor{colororange}{rgb}{0.8,0.2,0}
  \definecolor{colorpurple}{rgb}{0.6,0.0,0.6}
\def\bega{\begin{aligned}}
  \def\enda{\end{aligned}}
\def\bcase{\begin{cases}}
  \def\ecase{\end{cases}}
\def\bmx{\begin{bmatrix}}
  \def\emx{\end{bmatrix}}
\def\cf{\mathcal{F}}
\def\uu{{{u}}}
\def\WW{ W}
\def\NNp{{\mathbb N}}
\def\lec{\lesssim}
\begin{document}
\baselineskip=12.3pt

$\,$
\vskip1.2truecm
\title[The stochastic Navier-Stokes equations with general $L^{3}$ data]{The stochastic Navier-Stokes equations with general $L^{3}$ data}

\author[M.S.~Ayd\i n]{Mustafa Sencer Ayd\i n}
\address{Department of Mathematics, University of Southern California, Los Angeles, CA 90089}
\email{maydin@usc.edu}

\author[I.~Kukavica]{Igor Kukavica}
\address{Department of Mathematics, University of Southern California, Los Angeles, CA 90089}
\email{kukavica@usc.edu}

\author[F.H.~Xu]{Fanhui Xu}
\address{Department of Mathematics, Union College, Schenectady, NY 12308}
\email{xuf2@union.edu}

\begin{abstract}
We consider the stochastic Navier-Stokes equations with multiplicative noise
with critical initial data. Assuming that the initial data $u_0$ belongs to
the critical space $L^{3}$ almost surely, we construct a unique local-in-time probabilistically strong solution.
We also prove an analogous result for data in the critical space~$H^\frac{1}{2}$.
\hfill 
\today
\end{abstract}

\maketitle

\date{}

\startnewsection{Introduction}{sec01}
In this paper, we address the initial value problem for the stochastic Navier-Stokes equations (SNSE)
\begin{align} 
	\begin{split}
		&\partial_t u - \Delta u  + \mathcal{P}((u\cdot\nabla) u) = \sigma(t,u) \dot{W}(t),
		\\
		&\nabla\cdot u = 0,\\
		&u(0)=u_0
		,
	\end{split}
	\label{EQ01}
\end{align}
where \(\mathcal{P}\) is the Leray projector onto divergence-free and average-free fields, and we consider the equations on the three-dimensional torus with divergence-free and average-free initial data.
In \eqref{EQ01},
the right-hand side represents a cylindrical multiplicative
It\^o noise, with $\sigma$ satisfying
\begin{align}
	& \Vert\sigma(t, u_1)-\sigma(t, u_2)\Vert_{\mathbb{L}^{p}} 
	\le K\Vert  u_1-u_2\Vert_{L^{p}}
	\comma t\ge 0
	\commaone u_1, u_2\in L^{p}(\TT^3)
	\commaone
	p=3,6,\label{EQ04}
	\\&
	\sigma(t,0)=0 \comma t\ge 0
	\label{EQ05}
\end{align}
for some positive fixed constant~$K$.
Additionally, we assume $\div (\sigma(t, u))=0$
and $\int_{\mathbb{T}^{3}}\sigma(t,u)=0$ for $t\geq 0$ when
$\div u=0$ and $\int_{\mathbb{T}^{3}}u=0$. 

Specifically, we establish the existence and uniqueness of probabilistically strong solutions and derive an $L^3$-type energy inequality for general $L^3$ initial data. 
Our method can also be extended to the Hilbert space setting.
In the final section, we develop an existence and uniqueness theory for SNSE with arbitrarily large $H^\frac{1}{2}$ initial data, demonstrating that the solutions satisfy a Sobolev-type energy inequality.

Recall that the deterministic Navier-Stokes equations (NSE) possess
an important scaling property: If $u(t,x)$ is a solution
of the NSE, so is $\lambda u(\lambda^2 t,\lambda x)$.
Based on this, in three space dimensions, the Sobolev space $W^{s,p}$ is considered
critical for initial data if $ p= 3/(1+s)$, where the Sobolev norm is invariant under scaling,
and subcritical if $p>3/(1+s)$, where the norm decays under scaling, providing more regularity and control. Similarly, the space $L^q_t L^r_x$ is critical if $(2/q) + (3/r) = 1$ and subcritical if $(2/q) + (3/r) < 1$. In the deterministic case, the existence and uniqueness in both critical and subcritical spaces have been extensively studied. 
For example,
by \cite{FJR},
for $L^{p}$ spaces with
$p>3$, there exists a unique solution on a nontrivial time interval
$[0,T]$, where $T$ depends on the size of the initial data. The same paper also contains the statement on the equivalence between a mild and a weak solution when a solution belongs to
a certain $L^{q}_tL^{r}_x$ space.
Subsequently, Kato showed in \cite{K} that
for initial data in $L^{3}$ there exists a local-in-time
solution in~$L_t^{\infty}L_x^{3}$. Furthermore, the solution is global and unique if the initial data are sufficiently small. Thus far, this appears to be the case: uniqueness has been established only under additional conditions on the initial data or the solution. For another example, according to Kato~\cite{K}, the solution is unique
if it belongs to a certain weighted-in-time Sobolev space. It is
easy to modify Kato's proof to obtain the uniqueness of solutions $u$
in the class of solutions satisfying 
$\sup_{t\in[0,T]} t^{(r-3)/(2r)}\Vert u(t,\cdot)\Vert_{L^{r}}<\infty$,
where $r>3$. Also, the uniqueness holds in the class of Leray-Hopf solutions~\cite{ESS},
if it belongs to $C_tL^{3}_x$~\cite{L},
or if the pressure $p$ is locally square integrable~\cite{Ku}. The fact that the uniqueness of solutions in $L_{t}^{\infty}L_{x}^{3}$ under general assumptions remains an open problem indicates the inherent difficulty of solving the SNSE in~$L^{3}$.

The stochastic Navier–Stokes equations exhibit similar criticality, as seen with certain types of noise, such as transport noise. In these cases, if $u$ is a local solution, then $\lambda u(\lambda^2 t,\lambda x)$, for $\lambda>0$, remains to be a local solution, with the noise $\{W_n\}_{n \geq 1}$ replaced by the rescaled noise $\{\lambda^{-1} W_n(\lambda^2 t)\}_{n \geq 1}$~\cite{AV1, AV2}. This raises the same question posed in the study of deterministic NSE: Can the critical setting be identified as the optimal one, in which the local well-posedness can be established and potentially extended to the global well-posedness?  Compared with the deterministic setting, additional difficulties in reaching the critical threshold for \eqref{EQ01} arise from the presence of multiplicative noise and use of It\^o's calculus in $L^{p}$-type spaces. When the noise is purely additive, the difference between two solutions satisfies a deterministic PDE, allowing pathwise uniqueness to be established using deterministic methods~\cite{F}. Moreover, in its mild formulation, the solution can be decomposed into the sum of an Ornstein–Uhlenbeck-type process and a nonlinear correction, where the latter satisfies a standard PDE with random coefficients. This decomposition eliminates the stochastic forcing, reducing the SPDE to a random PDE. Consequently, the existence and uniqueness theory for deterministic PDEs can be applied to establish local well-posedness. If a global-in-time energy estimate can be derived, this local well-posedness extends to the global setting~\cite{GV}. In the absence of additive noise, the challenges are further compounded when working in $L^p$-type spaces, as classical It\^o calculus was developed for Hilbert spaces. The lack of an inner product structure prevents the use of tools like the It\^o isometry, necessitating additional geometric properties of the space and more sophisticated machinery.

A substantial body of work has focused on establishing the local existence of solutions in high-regularity (i.e., subcritical) function spaces. The SNSE with additive noise was studied
by Fernando, R\"udiger, and Sritharan in \cite{FRS},
who considered initial data in $L^{p}(\mathbb{R}^d)$ with $p>d$.
The borderline case $p=d$, still in the setting of additive noise, was later addressed by Mohan and Sritharan in~\cite{MoS}.
In the case of a multiplicative noise, Krylov~\cite{Kr} analyzed general parabolic equations in $W^{2,p}(\mathbb{R}^d)$, where $p>2$, focusing on the spatial regularity. Later, Mikulevicius and Rozovski\u{\i}~\cite{MR} established the local existence of solutions to the SNSE with the initial data in $W^{1,p}(\mathbb{R}^d)$, where $p>d$. They also proved that the solution becomes global when the equation is posed in 2D. A progress on the initial regularity in the Hilbert space setting
was made by Glatt-Holtz and Ziane in~\cite{GZ},
who obtained the
local well-posedness of the SNSE with multiplicative noise on a 3D bounded domain and proved the global existence of solutions in 2D, assuming that the initial data belong to~$H^{1}$.
Subsequently, Kim \cite{Ki} constructed
local solutions to the SNSE for initial data in $H^{s}(\mathbb{R}^3)$, where $s>1/2$,
and obtained almost global solutions for small data. When referring to almost global existence, we mean that for any $p_0>0$, the solutions are global with probability $1-p_0$, provided the initial data are sufficiently small.

Regarding the initial data with critical spatial regularity,
two of the authors established in~\cite{KX2} the almost global existence
of solutions for small data in the critical space~$L^{3}$, while the
authors of the present paper proved the almost global existence for
small initial data in $H^{1/2}$ under general assumptions on the
multiplicative noise, with solutions satisfying a form of energy inequality.
Additionally, we highlight the important work of Agresti and Veraar~\cite{AV1} (see also the recent review~\cite{AV2}), who employed the maximal regularity approach to construct local solutions for initial data in a range of Besov spaces. 
For other references on strong or martingale solutions
with initial data in $L^{p}$-type spaces,
see also~\cite{AV3,AV4,BR,BT,BeT,BCF,BF,BTr,CC,CF,CFH,DZ,FS,KV,KX1,KXZ,LR,MeS,R,ZBL},
while for some well-known deterministic results, see~\cite{FK,KT,T}

The ideas in~\cite{KX2} and \cite{AKX} involve decomposing the initial data into an infinite sum of subcritical components and solving a Navier-Stokes-type system for each. The solution to the original SNSE is then constructed explicitly as the sum of these solutions. An essential ingredient in this construction is the coherence between the $L^6$ and $L^3$ norms, as well as between the $H^\frac{1}{2}$ and $H^1$ norms: The solutions to the Navier-Stokes-type system remain controlled in the subcritical space as long as their critical norm stays small. Therefore, the arguments in~\cite{KX2} and \cite{AKX} rely on the smallness of the initial data's critical norms, which is also crucial for obtaining uniform energy estimates and ensuring the summability of these solutions.

The present paper uses a data decomposition to address initial data of arbitrary size. We first decompose the initial datum into two parts: one that is small in $L^\infty_\omega L^3_x$ and the other that is regular in $L^\infty_\omega L^6_x$ (or, in the case of $H^\frac{1}{2}$ data, one part is small in $L^\infty_\omega H_x^\frac{1}{2}$ and the other is regular in $L^\infty_\omega H^1_x$). Then, we invoke~\cite[Theorem 3.1]{KXZ} (respectively~\cite[Proposition~4.2]{GZ}) to obtain a unique strong solution $\bar{w}$ for the SNSE with large subcritical data. The challenging part is constructing a unique solution $w = u - \bar{w}$ for the remainder equations (see~\eqref{EQ25} below) with small critical data. Taking the case of $L^3$ data, for example, our main strategy is to use the smallness conditions and construct $w$ under the assumption that $\bar{w} \in L^6_\omega C_t L^6_x$. When $\bar{w}=0$, this is the problem posed in~\cite{KX2}. When $\bar{w} \neq 0$, we decompose the initial data into an infinite sum of subcritical components and construct $w$ as the sum of  solutions to Navier-Stokes-type systems. This formal sum does not necessarily solve the remainder equation~\eqref{EQ25} unless uniform estimates in the desired function spaces are established to pass to the limit. The presence of $\bar{w}$ in both the drift and noise terms further complicates the analysis. By utilizing cutoff functions and stopping times, we are able to obtain uniform energy estimates. Our proof demonstrates that the potentially large $L^6$-norm of $\bar{w}$ does not have a detrimental impact on the $L^3$ dynamics.

Establishing the uniqueness of solutions for the SNSE with large data in $L_t^\infty L_x^3$ presents a distinct challenge. Recall that even in the deterministic setting this problem is known to be difficult. While uniqueness for the deterministic Navier–Stokes equations can be deduced for sufficiently small $t$ in the class $L_t^\infty L^3_x \cap L^3_t L^9_x$ using estimates
\begin{align}
	\sup_{s\le t}\Vert U(s)\Vert_{L^{3}}^3
	+\int_0^t \Vert U(s)\Vert_{L^{9}}^3 \,ds
	\lec
	\sup_{s\le t} \Vert U\Vert_{L^{3}}^3 \int_0^t (\Vert u(s)\Vert_{L^{9}}^3+\Vert \tilde{u}(s)\Vert_{L^{9}}^3)\,ds
	,\label{EQ301}
\end{align}
where $u$ and $\tilde{u}$ are solutions and $U$ denotes their difference, this argument does not extend to the stochastic setting. Instead, for the SNSE, we obtain
\begin{align}
	\EE \biggl[ \Vert U(t)\Vert_{L^{3}}^3
	+\int_0^t \Vert U(s)\Vert_{L^{9}}^3 \,ds \biggr]
	\lec
	\EE \biggl[ \sup_{s\le t} \Vert U\Vert_{L^{3}}^3 \biggr] 
	\sup_\Omega \int_0^t (\Vert u(s)\Vert_{L^{9}}^3+\Vert \tilde{u}(s)\Vert_{L^{9}}^3)\,ds
	\llabel{EQ302}
\end{align}
whose right-hand side need not be small as $t \to 0$, even if it remains bounded. Therefore, the uniqueness established in this work applies to a slightly more restrictive class of initial data; see Theorem~\ref{T01} below.

The paper is organized as follows. The next section contains preliminaries and the main statement on local-in-time solutions with $L_{\omega}^{\infty}L_x^{3}$ initial data.
Section~\ref{sec04} contains a careful construction of the solution to the remainder equations~\eqref{EQ25}.
Finally, in Section~\ref{sec06}, we state and sketch the proof of the
local existence theorem for general $H^\frac{1}{2}$ data.

\startnewsection{Preliminaries and the main results}{sec02}

To construct a probabilistically strong solution, we fix a stochastic basis 
$(\Omega, \mathcal{F}, (\mathcal{F}_t)_{t \geq 0}, \mathbb{P})$ that meets the standard assumptions. 
Let $\mathcal{H}$ and $\mathcal{Y}$ be separable Hilbert spaces, and consider 
$\{\mathbf{e}_k\}_{k \geq 1}$, a complete orthonormal basis of \(\mathcal{H}\). 
The cylindrical Wiener process over $\mathcal{H}$ is defined as 
\begin{equation*}
\WW(t, \omega) := \sum_{k \geq 1} W_k(t, \omega) \mathbf{e}_k,
\end{equation*}
where $\{W_k : k \in \mathbb{N}^+\}$ is a family of independent Brownian motions on $(\Omega, \mathcal{F}, (\mathcal{F}_t)_{t \geq 0}, \mathbb{P})$. 
We denote by $l^2(\mathcal{H}, \mathcal{Y})$ the space of Hilbert-Schmidt operators, equipped with the norm
\begin{align}
\Vert G\Vert_{l^2( \mathcal{H},\mathcal{Y})}^2:= \sum_{k=1}^{\dim \mathcal{H}} | G \mathbf{e}_k|_{\mathcal{Y}}^2.
\llabel{EQ07}
\end{align}
In this notation, the Burkholder-Davis-Gundy inequality takes the form
\begin{align}
\EE \biggl[ \sup_{s\in(0,t]}\biggl| \int_0^s G \,d\WW_r \biggr|_{\mathcal{Y}}\biggr]
\leq 
\EE\biggl[ \left(\int_0^t \Vert G\Vert^2_{ l^2( \mathcal{H},\mathcal{Y})}\, dr \right)^{1/2}\biggr].
\llabel{EQ10}
\end{align} 
\colb

For $\alpha\in \mathbb{R}$ and $p\in [1, \infty)$, we introduce the standard Sobolev spaces
  \begin{align}
   W^{\alpha,p}=\left\{f\in \mathcal{S}'(\TT^3)
   : 
   \bigl((1+|n|^2)^{\alpha/2}\hat{f}\bigr)^{\vee}\in L^p(\TT^3)
  \right\},
   \llabel{EQ63}
  \end{align}
and the Sobolev space for the noise
  \begin{align}
   \mathbb{W}^{\alpha,p}=\left\{f\colon\TT^3\to l^2( \mathcal{H},\mathcal{Y})
   : 
  f\mathbf{e}_j\in W^{\alpha, p}(\TT^3), \ j\in \mathbb{N}, \text{ and } \int_{\TT^3} \bigl\Vert \bigl((1+|n|^2)^{\alpha/2}\hat{f}\bigr)^{\vee}\bigr\Vert_{l^2( \mathcal{H},\mathcal{Y})}^p \,dx<\infty
  \right\},
  \label{EQ08}
  \end{align}
  with respective norms
   \begin{align*}
  	\Vert f\Vert_{W^{\alpha,p}}:=\Vert \bigl((1+|n|^2)^{\alpha/2}\hat{f}\bigr)^{\vee}\Vert_{L^{p}}
  	\enskip \text{ and } \enskip
  	\Vert f\Vert_{\mathbb{W}^{\alpha,p}}:=\left( \int_{\TT^3} \bigl\Vert \bigl((1+|n|^2)^{\alpha/2}\hat{f}\bigr)^{\vee}\bigr\Vert_{l^2( \mathcal{H},\mathcal{Y})}^p \,dx\right)^{1/p}.
  \end{align*}
As usual, we abbreviate $L^{p}=W^{0, p}$ and $H^{\alpha}=W^{\alpha, 2}$.
Similarly, we write $\mathbb{L}^{p}=\mathbb{W}^{0, p}$ and $\mathbb{H}^{\alpha}=\mathbb{W}^{\alpha, 2}$. To address the noise, we interpret
$( \mathcal{P}f)  \mathbf{e}_k=\mathcal{P} (f \mathbf{e}_k)$. Throughout, $C$ denotes a generic positive constant that may change from line to line.

Now, we recall the concept of a probabilistically strong solution. Let $\tau$ be a stopping time on
$(\Omega, \cf, (\cf_t)_{t\geq 0}, \PP)$ that is positive almost surely, and let $u$ be a progressively measurable, divergence-free process with respect to the same stochastic basis. For $\alpha\in \mathbb{R}$ and $p\in [1, \infty)$, we say that the pair $(\uu,\tau)$ defines a $ W^{\alpha, p}$-valued solution of~\eqref{EQ01} if $u\in  L^p(\Omega; C([0,\tau], W^{\alpha, p}))$ and 
  \begin{align}
   \begin{split}
     (u_j( t \wedge \tau),\phi)
     &=
     (u_{j, 0},\phi)
   + \int_{0}^{t\wedge\tau}
         (u_j (r), \Delta\phi )
     \,dr
   + \int_{0}^{t\wedge\tau}
      \bigl(\bigl(\mathcal{P}  (u_m(r) u(r))\bigr)_j ,\partial_{m} \phi\bigr)
     \,dr
   \\&\indeq
     +\int_0^{t\wedge\tau} \bigl(\sigma_j(r, \uu(r)),\phi\bigr)\,dW(r)
     \Pas
    \comma j=1,2,3
   \end{split}
   \label{EQ16}
  \end{align}
holds for all $\phi\in C^{\infty}(\TT^3)$ and $t\in [0,\infty)$. With this definition, we now state our main results.

\cole
\begin{Theorem}[A local-in-time solution with $L^{3}$ initial data]
\label{T01}
Assume that $\uu_0\in L^\infty(\Omega; L^{3}(\TT^3))$ satisfies
$\nabla\cdot u_0=0$ and $\int_{\TT^3} u_0=0$, and
suppose that the assumptions~\eqref{EQ04} and \eqref{EQ05} hold.
Then there exists a unique solution $(u, \tau)$
of 
\eqref{EQ01} on $(\Omega, \mathcal{F},(\mathcal{F}_t)_{t\geq
0},\mathbb{P})$, with the initial condition
$u_0$, such that $u = w+ \bar{w}$ and
  \begin{align}
  	\begin{split}
  		\EE\biggl[
  		\sup_{0\leq s\leq \tau}
  		\Vert\uu(s,\cdot)\Vert_{L^{3}}^3
  		+\int_0^{\tau_{\bar{w}}} 
  		\sum_{j=1}^3    \int_{\TT^3} | \nabla (|u_j(s,x)|^{3/2})|^2 \,dx ds
  		\biggr]
  		\leq 
  		C\sup_{\Omega} \Vert u_0\Vert_{L^{3}}^3
  	\end{split}
  	\label{EQ17}
  \end{align}
for a positive constant~$C$. In addition, 
 \begin{align}
 	\sup_{\Omega} \sup_{0\le t \le \tau} \Vert w(t)\Vert_{L^{3}} \le \epsilon
 	\label{EQ305}
 \end{align}
 for a sufficiently small constant $\epsilon$, and
 \begin{align}
 	\sup_{\Omega} \sup_{0\le t \le \tau} \Vert \bar{w}(t)\Vert_{L^{6}} \le \tilde C
 	\label{EQ306}
 \end{align}
 for some positive constant $\tilde C$, where
  \begin{align}
   \mathbb{P}[\tau>0]
   =1.
   \label{EQ13}
  \end{align}
\end{Theorem}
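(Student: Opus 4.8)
The plan is to realize the decomposition $u=w+\bar w$ explicitly and then patch together four ingredients: (i) splitting the datum $u_0$, (ii) solving the $\bar w$-equation via a cited subcritical theory, (iii) solving the remainder equation~\eqref{EQ25} for $w$ with small critical datum, and (iv) proving uniqueness. First I would fix a large threshold and split $u_0 = w_0 + \bar w_0$ where, for a cutoff in frequency $P_{\le N}$, we set $\bar w_0 = P_{\le N}u_0$ and $w_0 = (I-P_{\le N})u_0$; by the $L^3$-boundedness of Littlewood--Paley projections and dominated convergence, $\Vert w_0\Vert_{L^\infty_\omega L^3_x}$ can be made $\le\epsilon$ for a prescribed small $\epsilon$ by taking $N$ large, while $\bar w_0\in L^\infty_\omega L^6_x$ (indeed in every $L^\infty_\omega H^s_x$) with a bound depending on $N$ and $\sup_\Omega\Vert u_0\Vert_{L^3}$. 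Both pieces are divergence-free and mean-zero since $u_0$ is and $P_{\le N}$ preserves these properties. Then I would invoke~\cite[Theorem~3.1]{KXZ} with the $L^6$-valued datum $\bar w_0$ to obtain a unique strong solution $\bar w$ of the SNSE with this datum on a (random) interval $[0,\tau_{\bar w}]$, satisfying $\bar w\in L^6_\omega C_tL^6_x$ and, in particular, the bound~\eqref{EQ306} for some $\tilde C=\tilde C(N,\sup_\Omega\Vert u_0\Vert_{L^3})$ after, if needed, further shrinking the stopping time so the $L^6$-norm stays below $\tilde C$.

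Next, with $\bar w$ in hand, $w:=u-\bar w$ must solve the remainder equation~\eqref{EQ25}, which is the Navier--Stokes system with the extra linear and bilinear terms coupling $w$ to the now-known process $\bar w$, driven by the noise $\sigma(t,w+\bar w)-\sigma(t,\bar w)$, and with small datum $w_0$. This is exactly the problem described in the introduction, and the content of Section~\ref{sec04}: one decomposes $w_0$ into an infinite sum of subcritical pieces $w_0 = \sum_k w_0^{(k)}$ with $\sum_k\Vert w_0^{(k)}\Vert_{L^6_x}$ suitably summable and $\Vert w_0^{(k)}\Vert_{L^3_x}$ small uniformly in $k$, solves a Navier--Stokes-type system for each summand (using the geometric coherence between $L^6$ and $L^3$ control so that the $L^6$ bounds persist while the $L^3$ norms remain small), and sets $w = \sum_k w^{(k)}$. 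The key analytic work is establishing uniform-in-$k$ and summable energy estimates: one runs the Itô formula for $\Vert w_j\Vert_{L^3}^3$ (equivalently the $L^3$ energy identity with the parabolic gain $\sum_j\int|\nabla(|w_j|^{3/2})|^2$), controls the $\bar w$-coupling terms using $\bar w\in L^\infty_\omega L^6_x$ together with Sobolev embedding $\dot H^1\hookrightarrow L^6$ applied to $|w_j|^{3/2}$, and absorbs the top-order contributions using the smallness of $\Vert w\Vert_{L^3}$ on a stopping-time interval where this smallness is enforced — precisely the stopping time $\tau$ in~\eqref{EQ305}. Cutoff functions in the $L^3$-norm of $w$ keep the nonlinearity subcritical before one removes them via the stopping time. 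Summing the per-$k$ estimates and using BDG in the $L^3$-setting yields~\eqref{EQ17} (with $w$ in place of $u$), and adding the $\bar w$-estimate from~\cite{KXZ} and the triangle inequality in $L^3$ upgrades it to the stated bound for $u$; positivity $\mathbb P[\tau>0]=1$ in~\eqref{EQ13} follows since each constituent stopping time is a.s.\ positive and $\tau$ is their minimum, the continuity of the relevant norms at $t=0$ giving strict positivity.

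Finally, uniqueness: given two solutions $u,\tilde u$ of~\eqref{EQ01} in the stated class with the same datum, set $U=u-\tilde u$ and run the $L^3$ energy identity for $U$. The drift produces the bilinear difference, estimated as in~\eqref{EQ301} but now only in expectation, giving a Gronwall-type inequality $\EE[\sup_{s\le t}\Vert U\Vert_{L^3}^3] \le \EE[\sup_{s\le t}\Vert U\Vert_{L^3}^3]\cdot\sup_\Omega\int_0^t(\Vert u\Vert_{L^9}^3+\Vert\tilde u\Vert_{L^9}^3)\,ds$ plus a noise term controlled by~\eqref{EQ04} and BDG; since the solutions built above lie in $L^3_tL^9_x$ pathwise with a deterministic bound (this is where the slightly restrictive class is used), the time integral is $o(1)$ as $t\to0$, so the factor is $<1$ for small $t$ and $U\equiv0$ on $[0,t]$, and a standard continuation/patching argument extends this to the full interval. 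The main obstacle, as flagged in the introduction, is the third step — obtaining the uniform energy estimates for the infinite sum defining $w$ in the presence of $\bar w$ in both the drift and the noise — since the $L^6$-norm of $\bar w$ is only bounded, not small, and one must show it does not spoil the $L^3$ dynamics; the delicate point is bookkeeping the stopping times and cutoffs so that the smallness of $\Vert w\Vert_{L^3}$ is genuinely available exactly where the top-order terms are absorbed, uniformly across the countably many subproblems.
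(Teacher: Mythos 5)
There are two genuine gaps, and both stem from the same confusion between pathwise (almost sure) smallness and smallness that is uniform over~$\Omega$.

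First, your decomposition of the datum by a frequency cutoff does not deliver what you need. For a single $f\in L^3$ one indeed has $\Vert (I-P_{\le N})f\Vert_{L^3}\to 0$, but for a family $\{u_0(\omega)\}$ that is merely bounded in $L^\infty(\Omega;L^3)$ there is no uniform rate: dominated convergence gives convergence for each fixed $\omega$, not $\Vert (I-P_{\le N})u_0\Vert_{L^\infty_\omega L^3_x}\to 0$ (consider data concentrating at frequencies $N(\omega)\to\infty$). Since the constant $\epsilon_0$ must be deterministic for the cutoff and stopping-time machinery of Proposition~\ref{P01} to run, and since $\bar w_0$ must lie in $L^\infty(\Omega;L^6)$ with a deterministic bound $K_0$ for Lemma~\ref{T03} to apply, your splitting does not suffice as stated. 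The paper avoids this by using the amplitude-type decomposition of Lemma~\ref{L05}, which produces pieces whose $L^\infty_\omega L^3_x$ norms decay geometrically, so the tail is small uniformly in~$\omega$.

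Second, and more seriously, your uniqueness argument is exactly the one the introduction of the paper identifies as failing. After taking expectations, the Gr\"onwall factor you obtain is $\sup_\Omega\int_0^t(\Vert u\Vert_{L^9}^3+\Vert\tilde u\Vert_{L^9}^3)\,ds$; even if this quantity is bounded (deterministically) on $[0,T]$, it need not tend to $0$ as $t\to 0$, because the pathwise vanishing of the time integral is not uniform over~$\Omega$. So you cannot conclude the factor is $<1$ for small $t$. The ``slightly restrictive class'' in Theorem~\ref{T01} is not an $L^3_tL^9_x$ class but the class of solutions of the form $u=w+\bar w$ satisfying \eqref{EQ305}--\eqref{EQ306}. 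The paper's proof exploits precisely this structure: in the bilinear term it splits $|u|^2+|\tilde u|^2\lesssim(|w|^2+|\tilde w|^2)+(|\bar w|^2+|\tilde{\bar w}|^2)$, absorbs the first contribution into the dissipation using the \emph{uniform-in-$(\omega,t)$} smallness $\Vert w\Vert_{L^3}\le\epsilon$ (which multiplies $\Vert U\Vert_{L^9}^3$ directly, no time-integral smallness needed), and handles the second via H\"older in $L^{9/2}$, interpolation between $L^3$ and $L^9$, Young's inequality, the Poincar\'e-type bound \eqref{EQ14}, and the uniform $L^6$ bound on $\bar w$, with the remaining $\Vert U\Vert_{L^3}^3$ term closed by taking $T$ small. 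Your sketch of the existence part (the infinite subcritical decomposition of $w_0$, cutoffs, stopping times, and uniform $L^3$/$L^6$ energy estimates for the remainder equation \eqref{EQ25}) is consistent with Section~\ref{sec04}, but as written the proof of uniqueness and the very first splitting step do not go through.
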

\colb

\colu

 \colb

The proof of Theorem~\ref{T01} is presented in Section~\ref{sec04}. Here, we outline the main idea: We construct the solution by decomposing the initial datum $u_0 \in L^\infty_\omega L^3_x$ into two divergence-free and average-free parts, 
\begin{align}
	u_0 = \bar{w}_0 + w_0
	,\label{EQ21}
\end{align} 
satisfying
\begin{align}
	\Vert w_0\Vert_{L^\infty_\omega L_x^{3}} \le \epsilon_0
	\andand
	\Vert \bar{w}_0\Vert_{L^\infty_\omega L_x^{6}} \le K_0
	,
	\label{size}
\end{align}
where $\epsilon_0$ is to be chosen small (see the proofs of Lemmas~\ref{L11}--\ref{L08} and the proof of uniqueness in Theorem~\ref{T01}) and
$K_0>0$ depends on~$\epsilon_0$. 
To achieve such decomposition, we rely on~\cite[Lemma~3.1]{KX2}, which we now state.
\cole
\begin{Lemma}[Decomposition of initial data]
	\label{L05}
	Let $u_0 \in L^\infty_\omega L^3_x$ be divergence-free and average-free.
	There exists a sequence of functions 
	\begin{equation}
		v_{0}^{(0)},v^{(1)}_{0},v_{0}^{(2)},\ldots \in \LLL{\infty}{\infty}
		\llabel{EQ024}
	\end{equation}
	that are divergence-free and average-free
	such that
	\begin{equation}
		\Vert   v_{0}^{(0)}\Vert_{L^\infty_\omega L^3_x}
		\leq 2\Vert   u_0\Vert_{L^\infty_\omega L^3_x}
		\llabel{EQ25}
	\end{equation}
	and
	\begin{equation}
		\Vert   v_{0}^{(k)}\Vert_{L^\infty_\omega L^3_x}
		\leq \frac{\Vert   u_0\Vert_{L^\infty_\omega L^3_x}}{4^{k}}
		\comma k=1,2,3,\ldots
		;
		\llabel{EQ026}
	\end{equation}
	additionally, we have
	\begin{equation}
		u_{0} = v_{0}^{(0)} + v_{0}^{(1)} + v_{0}^{(2)} + \cdots \quad\mbox{ in }L_x^3
		,
		\label{EQ027}
	\end{equation}
	almost surely.
\end{Lemma}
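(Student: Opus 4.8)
The plan is to reduce the statement to a Littlewood--Paley decomposition of $u_0$ in $L^3$. Since $u_0\in L^\infty_\omega L^3_x$ and $L^3(\TT^3)$ embeds into the critical Besov space $B^{0}_{3,\infty}$ (and indeed $u_0$, being in $L^3$, can be approximated in $L^3$ by its low-frequency truncations), the Littlewood--Paley pieces $\Delta_q u_0$ satisfy $\sum_q \Delta_q u_0 = u_0$ in $L^3_x$ almost surely, but the individual pieces need not be summably small in $L^3$. To fix this, I would group frequency blocks: choose an increasing sequence of frequency cutoffs $N_0 < N_1 < N_2 < \cdots$ (depending on $\omega$ only through the fixed bound $\|u_0\|_{L^\infty_\omega L^3_x}$, using the uniform $L^\infty_\omega$ control) and set $v_0^{(0)} = P_{\le N_0} u_0$ and $v_0^{(k)} = (P_{\le N_k} - P_{\le N_{k-1}}) u_0$ for $k\ge 1$, where $P_{\le N}$ is a smooth Fourier multiplier truncation. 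Each $v_0^{(k)}$ is then a finite-frequency, hence smooth, bounded function, i.e.\ in $\LLL{\infty}{\infty}$; the Leray projector and the multiplier preserve the divergence-free and average-free conditions, so each $v_0^{(k)}$ inherits those.

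The key quantitative step is the choice of the cutoffs $N_k$. Because $P_{\le N} u_0 \to u_0$ in $L^3_x$ as $N\to\infty$ for each fixed $\omega$, and because the relevant operator norms of $P_{\le N}$ on $L^3$ are bounded uniformly in $N$, one has $\|u_0 - P_{\le N}u_0\|_{L^3_x}\to 0$; however, the rate depends on $\omega$. To get a deterministic geometric bound one must exploit that $u_0$ lives in a fixed ball of $L^3$ only, which is not enough for a uniform rate over that ball. The remedy (this is the actual content of \cite[Lemma~3.1]{KX2}) is to choose the $N_k$ \emph{pathwise}, i.e.\ as stopping-time-like random frequencies measurable with respect to $u_0$: given $\omega$, pick $N_0(\omega)$ so that $\|P_{\le N_0}u_0\|_{L^3}\le 2\|u_0\|_{L^3}\le 2\|u_0\|_{L^\infty_\omega L^3_x}$ (the factor $2$ is a harmless buffer, always achievable since $\|P_{\le N}u_0\|_{L^3}\le C\|u_0\|_{L^3}$ and one can even take $N_0$ with $\|P_{\le N_0}u_0\|_{L^3} \le \|u_0\|_{L^3}(1+\text{small})$ after absorbing the multiplier constant, or simply accept the bound with constant $2$), and then pick $N_k(\omega) > N_{k-1}(\omega)$ large enough that $\|u_0 - P_{\le N_k}u_0\|_{L^3}\le \|u_0\|_{L^\infty_\omega L^3_x}/4^{k+1}$. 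This is possible for each $\omega$ by convergence of the truncations. With this choice, $v_0^{(k)} = (P_{\le N_k}-P_{\le N_{k-1}})u_0$ satisfies
\begin{align*}
\|v_0^{(k)}\|_{L^3_x} \le \|u_0 - P_{\le N_{k-1}}u_0\|_{L^3_x} + \|u_0 - P_{\le N_k}u_0\|_{L^3_x} \le \frac{\|u_0\|_{L^\infty_\omega L^3_x}}{4^{k}}+\frac{\|u_0\|_{L^\infty_\omega L^3_x}}{4^{k+1}} \le \frac{\|u_0\|_{L^\infty_\omega L^3_x}}{4^{k}},
\end{align*}
after possibly relabelling constants, and taking the supremum over $\omega$ gives the $L^\infty_\omega L^3_x$ bounds claimed.

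Finally I would verify the telescoping identity: for each $\omega$, $\sum_{k=0}^{K} v_0^{(k)} = P_{\le N_K}u_0 \to u_0$ in $L^3_x$ as $K\to\infty$, which is exactly \eqref{EQ027} almost surely. Measurability of the $v_0^{(k)}$ (as $L^3_x$-valued random variables) follows because each $N_k$ is chosen as, say, the smallest integer satisfying the defining inequality, hence a measurable function of $u_0$, and $P_{\le N}u_0$ depends measurably on $(N,u_0)$. The main obstacle is the one flagged above: obtaining the geometric decay $4^{-k}$ with a constant independent of $\omega$ despite the fact that convergence of Fourier truncations in $L^3$ has an $\omega$-dependent rate. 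This is resolved by allowing the cutoff frequencies themselves to be random (measurable in $u_0$), which costs nothing since the conclusion only asks for $v_0^{(k)}\in\LLL{\infty}{\infty}$ and norm bounds, not for any uniformity of the frequency support. Everything else --- smoothness of finitely-supported-frequency functions, preservation of the divergence-free/average-free structure under $P_{\le N}$ and $\mathcal P$, and the telescoping sum --- is routine.
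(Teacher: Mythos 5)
This lemma is not proved in the paper itself; it is imported verbatim from \cite[Lemma~3.1]{KX2}, so the only comparison available is with the natural argument, which is essentially the one you give: telescoping smooth approximations of $u_0(\omega)$ with approximation parameters chosen pathwise. Your version via smooth Fourier truncations $P_{\le N_k(\omega)}$ is correct in its essentials: the multipliers preserve the divergence-free and mean-zero conditions (no Leray projector is needed), each piece is almost surely a trigonometric polynomial, measurability follows from taking $N_k(\omega)$ minimal, and the partial sums telescope to $u_0$ in $L^3_x$ almost surely. Two points to tighten (writing $M=\Vert u_0\Vert_{L^\infty_\omega L^3_x}$). First, the displayed chain $\Vert v_0^{(k)}\Vert_{L^3}\le M/4^{k}+M/4^{k+1}\le M/4^{k}$ is false as written; you should calibrate the tails to, say, $\Vert u_0-P_{\le N_k}u_0\Vert_{L^3}\le M/4^{k+2}$ so that the triangle inequality genuinely yields $4^{-k}M$, since the lemma asserts that exact constant. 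Second, you should be explicit that, because the $N_k(\omega)$ are in general unbounded over $\omega$ (as you correctly observe, there is no uniform truncation rate on an $L^3$-ball), the conclusion $v_0^{(k)}\in \LLL{\infty}{\infty}$ can only hold in the pathwise sense: $\sup_\omega\Vert v_0^{(k)}(\omega)\Vert_{L^\infty_x}$ need not be finite, and indeed cannot be made finite by any construction --- letting $u_0$ equal, on events of positive probability, profiles of height $Mj^{1/3}$ supported on sets of measure $1/j$ shows that no $v_0^{(0)}$ with a uniform $L^\infty_x$ bound can leave a remainder of $L^\infty_\omega L^3_x$ norm at most $M/3$. This is consistent with the statement as you read it, but it is worth flagging, since quantities such as $\Vert v_0^{(k)}\Vert_{L^\infty_\omega L^6_x}$ reappear later (Lemma~\ref{L08}) and are then not automatically finite.
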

\colb

Now, we apply Lemma~\ref{L05} and choose $N\in\mathbb{N}$
sufficiently large so that
\begin{align}
	w_0 = \sum_{k\ge N} v_0^{(k)}
	\llabel{EQ300}
\end{align}
satisfies
$	\Vert w_0\Vert_{L^\infty_\omega L_x^{3}} \le
\epsilon_0<1/2$. Since $N$ is finite,
the remainder
$\bar{w}_0 = \sum_{k\leq N-1} v_{0}^{(k)}$
satisfies
$K_0=\Vert \bar{w}_0\Vert_{L^\infty_\omega L_x^{6}} <\infty$.
By the almost sure convergence in \eqref{EQ027},
we obtain $u_0=w_0+\bar w_0$ with~\eqref{size}.

\colb
For $\bar{w}_0$, we consider the Navier-Stokes system 
\begin{align}
	\begin{split}
		&(\partial_t - \Delta)\bar{w} 
		=-\mathcal{P} 
		(\bar{w} \cdot \nabla \bar{w}) +\sigma(t,\bar{w})\dot{W}(t),
		\\
		&
		\nabla \cdot \bar{w} = 0 \comma \bar{w}(0) =  \bar{w}_0,
		\text{ and }  \sup_\Omega\Vert \bar{w}_0\Vert_{L^6} \le K_0,
		\label{EQ22}
	\end{split} 
\end{align}
and apply
\cite[Theorem~3.1]{KXZ}, which we cite next.

\cole
\begin{Lemma}
	\label{T03}
(Local-in-time strong solution with initial data in $L^\infty_\omega L^6_x$)
	Let $T>0$. Suppose that $\bar{w}_0\in L^{\infty}(\Omega; L^6(\TT^3))$
	satisfies $\nabla\cdot \bar{w}_0=0$ and $\int_{\TT^3} \bar{w}_0=0$, and assume that the assumptions~\eqref{EQ04} and \eqref{EQ05} hold.
	Then there exist
a stopping time
	$\tau_{\bar{w}}\in(0,T)$~a.s.\ and a unique solution $(\bar{w}, \tau_{\bar{w}})$
	of 
	\eqref{EQ01} on $(\Omega, \mathcal{F},(\mathcal{F}_t)_{t\geq 0},\mathbb{P})$ with the initial condition
	$\bar{w}_0$ such that
	\begin{align}
		\begin{split}
			\EE\biggl[
			\sup_{0\leq s\leq \tau_{\bar{w}}}\Vert\bar{w}(s,\cdot)\Vert_{L^6}^6
			+\int_0^{\tau_{\bar{w}}} 
			\sum_{j=1}^3    \int_{\TT^3} | \nabla (|\bar{w}_j(s,x)|^{3})|^2 \,dx ds
			\biggr]
			\leq 
			C\EE[\vert \bar{w}_0\Vert_{L^6}^6+1],
		\end{split}
		\label{EQ15}
	\end{align}
where $C>0$ may depend on~$T$. Moreover, $\bar{w}$ satisfies 
\begin{align}
	\sup_\Omega \sup_{0\le t \le \tau_{\bar{w}}} \Vert \bar{w}(t)\Vert_{L^{6}} \leq \tilde C
	\label{EQ304}
\end{align}
for some positive constant~$\tilde C$.
\end{Lemma}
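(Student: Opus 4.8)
Lemma~\ref{T03} is quoted from \cite[Theorem~3.1]{KXZ}, so I only outline the argument used there. The plan is a Galerkin scheme on the fixed stochastic basis, an $L^{6}$-type a~priori estimate that closes only on a short random time interval (because of a superlinear forcing term), and an upgrade of the resulting martingale solution to a probabilistically strong one via pathwise uniqueness. First I would project \eqref{EQ22} onto $\mathrm{span}\{e_m : |m|\le n\}$ by Fourier truncation $P_{\le n}$; the truncated system is a finite-dimensional It\^o SDE whose coefficients are locally Lipschitz and of linear growth by \eqref{EQ04}--\eqref{EQ05}, hence has a global solution $\bar{w}^{(n)}$. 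Localize with $\tau^{(n)}_{R}=T\wedge\inf\{t:\|\bar{w}^{(n)}(t)\|_{L^{6}}\ge R\}$ for a fixed $R>K_0$.

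The core is the $L^{6}$ estimate, which on the Galerkin level follows from the classical finite-dimensional It\^o formula applied to $t\mapsto\int_{\TT^3}|\bar{w}^{(n)}_j|^{6}\,dx$ and summed over $j$. The dissipation produces the good term $-c\sum_j\int_{\TT^3}|\nabla(|\bar{w}^{(n)}_j|^{3})|^{2}$ via the identity $-\int_{\TT^3}\Delta v\,|v|^{4}v=5\int_{\TT^3}|v|^{4}|\nabla v|^{2}=\tfrac59\int_{\TT^3}|\nabla(|v|^{3})|^{2}$. Writing $\mathcal P((v\cdot\nabla)v)=(v\cdot\nabla)v+\nabla p$ with $-\Delta p=\partial_i\partial_j(v_iv_j)$, the transport part is killed by incompressibility ($\int_{\TT^3}v_k\partial_k v_j\,|v_j|^{4}v_j=-\tfrac16\int_{\TT^3}(\partial_k v_k)|v_j|^{6}=0$), while the pressure part $5\int_{\TT^3}p\,|v_j|^{4}\partial_j v_j$ is bounded, after H\"older, by $\|\nabla(|v_j|^{3})\|_{L^{2}}\|p\|_{L^{6}}\|v\|_{L^{6}}^{2}$; the Calder\'on--Zygmund bound $\|p\|_{L^{6}}\lesssim\|v\|_{L^{12}}^{2}$ together with the interpolation $\|v\|_{L^{12}}\lesssim\|v\|_{L^{6}}^{1/4}\|\nabla(|v|^{3})\|_{L^{2}}^{1/4}$ (from $|v|^{3}\in\dot H^{1}\hookrightarrow L^{6}$) converts this into $\delta\sum_j\|\nabla(|v_j|^{3})\|_{L^{2}}^{2}+C_\delta\|v\|_{L^{6}}^{10}$. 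The It\^o correction is $\lesssim\|v\|_{L^{6}}^{4}\|\sigma(t,v)\|_{\mathbb L^{6}}^{2}\lesssim K^{2}\|v\|_{L^{6}}^{6}$ by \eqref{EQ04}--\eqref{EQ05}, and the martingale term is handled by Burkholder--Davis--Gundy (estimating the quadratic-variation density by $K^{2}\|v\|_{L^{6}}^{12}$ through Minkowski's integral inequality and \eqref{EQ04}--\eqref{EQ05}) and Young, contributing $\tfrac14\EE[\sup_{s\le\cdot}\|v\|_{L^{6}}^{6}]+C\EE\int\|v\|_{L^{6}}^{6}$. Absorbing the $\delta$-terms into the dissipation, taking expectations, and running a Gr\"onwall/continuation argument on $[0,\tau^{(n)}_{R}]$ (the superlinear $\|v\|_{L^{6}}^{10}$ is what restricts the interval) yield a bound on $\EE[\sup_{s\le\tau^{(n)}_R}\|\bar{w}^{(n)}(s)\|_{L^{6}}^{6}+\int_0^{\tau^{(n)}_R}\sum_j\|\nabla(|\bar{w}^{(n)}_j|^{3})\|_{L^{2}}^{2}]$ uniform in~$n$, after $T$ (or $R$) is fixed in terms of $K_0$ and~$K$.

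Using these bounds and a fractional-in-time estimate, the laws of $(\bar{w}^{(n)})$ are tight in a suitable path space, so a Skorokhod/Jakubowski argument produces a martingale solution $\bar{w}$ of \eqref{EQ22} that inherits \eqref{EQ15} by lower semicontinuity. Pathwise uniqueness follows from an $L^{2}$ estimate for the difference $U=\bar{w}-\tilde{\bar{w}}$ of two solutions with the same data: the pressure drops since $\nabla\cdot U=0$, the convective term splits into a vanishing part and a part $\lesssim\|\tilde{\bar{w}}\|_{L^{6}}\|U\|_{L^{2}}^{1/2}\|\nabla U\|_{L^{2}}^{3/2}$ absorbed after Young, and the noise increment is controlled by $\|\sigma(t,\bar{w})-\sigma(t,\tilde{\bar{w}})\|_{\mathbb L^{2}}\lesssim\|\sigma(t,\bar{w})-\sigma(t,\tilde{\bar{w}})\|_{\mathbb L^{3}}\le K\|U\|_{L^{3}}$ and interpolation between $L^{2}$ and $L^{6}$; a stopped Gr\"onwall then forces $U\equiv0$, so by the Gy\"ongy--Krylov criterion the martingale solution is probabilistically strong and unique. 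Finally, setting $\tau_{\bar{w}}=T\wedge\inf\{t:\|\bar{w}(t)\|_{L^{6}}\ge\tilde C\}$ with $\tilde C>K_0$ and using that $t\mapsto\|\bar{w}(t)\|_{L^{6}}$ is continuous with $\|\bar{w}(0)\|_{L^{6}}\le K_0<\tilde C$ a.s.\ gives $\PP[\tau_{\bar{w}}>0]=1$ and \eqref{EQ304}.

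The main obstacle is the $L^{6}$ energy estimate, i.e.\ making It\^o's formula usable in the non-Hilbertian space $L^{6}$: one has to stay on the Galerkin level, where the formula is classical, and then control the Leray-projector/pressure contribution \emph{uniformly in $n$}, which is precisely where the Calder\'on--Zygmund bound for $p$ and the interpolation against the dissipative quantity $\|\nabla(|v_j|^{3})\|_{L^{2}}$ are needed; the stochastic terms, while requiring careful tracking of the powers of $\|v\|_{L^{6}}$, are routine once BDG and \eqref{EQ04}--\eqref{EQ05} are in hand.
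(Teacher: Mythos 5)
You correctly note that the paper does not prove this lemma but imports it from \cite[Theorem~3.1]{KXZ}; however, your reconstruction of the cited argument takes a genuinely different route, and it has a real gap at its core. The cited proof does not use a Galerkin scheme with tightness/Skorokhod/Gy\"ongy--Krylov. It truncates the nonlinearity with cutoff functions of the $L^6$ norm, solves the truncated equation globally by a Banach fixed point in $L^6(\Omega;C([0,T];L^6))$ using heat-semigroup and stochastic-convolution estimates in $L^p$, obtains the energy inequality \eqref{EQ15} by applying an $L^p$ It\^o formula directly to the solution of the SPDE (this is exactly the device recorded as Lemma~\ref{L02} in the present paper), and then removes the cutoff by a stopping-time argument; \eqref{EQ304} and $\PP[\tau_{\bar w}>0]=1$ then follow from continuity of trajectories and $\sup_\Omega\Vert\bar w_0\Vert_{L^6}\le K_0$, much as in your last step.

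The genuine gap in your version is the claim that the $L^6$ estimate closes at the Galerkin level. When you apply It\^o's formula to $\int_{\TT^3}|\bar w^{(n)}_j|^6$, the drift is $P_{\le n}\mathcal{P}\bigl[(\bar w^{(n)}\cdot\nabla)\bar w^{(n)}\bigr]$, and the pairing with $|\bar w^{(n)}_j|^4\bar w^{(n)}_j$ cannot be freed of $P_{\le n}$ because $|\bar w^{(n)}_j|^4\bar w^{(n)}_j$ does not lie in the Galerkin subspace; the convective cancellation $\int_{\TT^3} v_k\partial_k v_j\,|v_j|^4v_j=0$ and your pressure estimate are therefore not available as stated. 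Worse, the sharp Fourier ball cutoff is not uniformly bounded on $L^6(\TT^3)$ in dimension three, so one cannot simply discard the projection at the price of a constant. This is precisely the known obstruction that makes $L^p$ ($p\neq2$) a priori estimates incompatible with naive spectral Galerkin approximation, and it is why the cited proof works with the untruncated heat semigroup in a mild formulation and performs the $L^p$ energy estimate on the actual solution rather than on a projected approximation. A smaller slip: the Galerkin drift is quadratic, hence not of linear growth; non-explosion of the Galerkin SDE comes from the $L^2$ cancellation, not from linear growth of the coefficients. The remaining ingredients of your outline --- the dissipation identity, the interpolation $\Vert v\Vert_{L^{12}}\lesssim\Vert v\Vert_{L^6}^{1/4}\Vert\nabla(|v|^3)\Vert_{L^2}^{1/4}$, the $L^2$ pathwise-uniqueness estimate using $\bar w\in C_tL^6_x$, and the definition of $\tau_{\bar w}$ yielding \eqref{EQ304} --- are sound and consistent with the cited argument once the approximation scheme is repaired.
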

\colb

Given the existence of $\bar{w}$ asserted by Lemma~\ref{T03}, we can apply Lemma~\ref{L02} and Young's inequality to derive the following $L^3$ estimate for~$\bar{w}$.

\cole
\begin{Corollary}[$L^3$ estimate for $\bar{w}$]
	Under the assumptions of Lemma~\ref{T03}, $\bar{w}$ satisfies
	\begin{align}
		\begin{split}
			\EE\biggl[
			\sup_{0\leq s\leq \tau_{\bar{w}}}\Vert \bar{w}(s,\cdot)\Vert_{L^3}^3
			+\int_0^{\tau_{\bar{w}}} 
			\sum_{j=1}^3    \int_{\TT^3} | \nabla (|\bar{w}_j(s,x)|^{3/2})|^2 \,dx ds
			\biggr]
			\leq 
			C
			,
		\end{split}
		\label{EQ300}
	\end{align}
	where the constant $C>0$ depends on $T$ and~$K_0$ (see~\eqref{EQ22}).
\end{Corollary}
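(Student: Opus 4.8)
The plan is to upgrade the $L^6$ bound \eqref{EQ15} of Lemma~\ref{T03} to the $L^3$ bound \eqref{EQ300} by exploiting that $\TT^3$ has finite measure, combined with a pointwise Young inequality and the classical stochastic $L^2$ energy estimate. For the supremum term we use Hölder's inequality on the torus, $\Vert\bar w(s)\Vert_{L^3}^3\le|\TT^3|^{1/2}\Vert\bar w(s)\Vert_{L^6}^3$, followed by Young's inequality to dominate the cubic power by $1+\Vert\bar w(s)\Vert_{L^6}^6$; taking the supremum over $s\in[0,\tau_{\bar w}]$, then the expectation, and invoking \eqref{EQ15} together with $\sup_\Omega\Vert\bar w_0\Vert_{L^6}\le K_0$ (see~\eqref{EQ22}) bounds $\EE\sup_{0\le s\le\tau_{\bar w}}\Vert\bar w(s)\Vert_{L^3}^3$ by a constant depending on $T$ and~$K_0$.

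For the dissipation term I would start from the a.e.\ chain-rule identities $|\nabla(|\bar w_j|^{3/2})|^2=\tfrac94|\bar w_j|\,|\nabla\bar w_j|^2$ and $|\nabla(|\bar w_j|^3)|^2=9|\bar w_j|^4|\nabla\bar w_j|^2$, which are legitimate since the finiteness of the left-hand side of \eqref{EQ15} and the standard $L^2$ energy estimate (see below) provide enough Sobolev regularity for $\bar w_j$ and $|\bar w_j|^3$. Writing $|\bar w_j|\,|\nabla\bar w_j|^2=(|\bar w_j|^4|\nabla\bar w_j|^2)^{1/4}(|\nabla\bar w_j|^2)^{3/4}$ and applying Young's inequality with exponents $4$ and $4/3$ (this is the role of Lemma~\ref{L02} together with Young) yields the pointwise bound
\[
 |\nabla(|\bar w_j|^{3/2})|^2\le\tfrac1{16}|\nabla(|\bar w_j|^3)|^2+\tfrac{27}{16}|\nabla\bar w_j|^2 .
\]
Integrating over $\TT^3\times[0,\tau_{\bar w}]$, summing in $j$, and taking the expectation, the first term on the right is controlled by \eqref{EQ15}, so the dissipation term in \eqref{EQ300} is bounded once $\EE\int_0^{\tau_{\bar w}}\Vert\nabla\bar w\Vert_{L^2}^2\,ds$ is.

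The remaining ingredient, $\EE\int_0^{\tau_{\bar w}}\Vert\nabla\bar w\Vert_{L^2}^2\,ds<\infty$, is the classical $L^2$ energy estimate: applying It\^o's formula (Lemma~\ref{L02}) to $\Vert\bar w\Vert_{L^2}^2$ up to $t\wedge\tau_{\bar w}$, using that the convective term vanishes by $\nabla\cdot\bar w=0$ and that $\Vert\sigma(s,\bar w)\Vert_{\mathbb{L}^{2}}^2\le K^2\Vert\bar w\Vert_{L^2}^2$ by \eqref{EQ04}--\eqref{EQ05}, localizing the stochastic integral by stopping times and applying Gronwall's inequality, one obtains $\EE\int_0^{\tau_{\bar w}}\Vert\nabla\bar w\Vert_{L^2}^2\,ds\le e^{K^2T}\EE\Vert\bar w_0\Vert_{L^2}^2\le C(T)K_0^2$, where we used $\Vert\bar w_0\Vert_{L^2}\le|\TT^3|^{1/6}\Vert\bar w_0\Vert_{L^6}\le CK_0$. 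Collecting the three bounds gives \eqref{EQ300} with $C$ depending only on $T$ and~$K_0$.

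The main difficulty here is not analytic but one of bookkeeping and regularity: one must verify that the chain-rule identities for $|\bar w_j|^{3/2}$ and $|\bar w_j|^3$ and the It\^o formula for $\Vert\bar w\Vert_{L^2}^2$ are genuinely valid for the solution $\bar w$ produced by Lemma~\ref{T03}. This is precisely where the $C([0,\tau_{\bar w}],L^6)$ regularity of $\bar w$, the finiteness of the left-hand side of \eqref{EQ15}, and the usual stopping-time localization of the martingale term are needed; once these are in place the estimate is routine.
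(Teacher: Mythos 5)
Your argument reaches the right conclusion, but by a genuinely different route than the paper. The paper's proof is essentially one application of Lemma~\ref{L02}: since $\nabla\cdot\bar w=0$, the system \eqref{EQ22} has the form \eqref{EQ18} with $f=-\mathcal{P}(\bar w\otimes\bar w)$ and $g=\sigma(t,\bar w)$, so one invokes Lemma~\ref{L02} directly with $p=3$. The drift contribution $\EE\int_0^{T}\int_{\TT^3}|f|^2|\bar w|\,dx\,dt$ is bounded, via Young's inequality, by $C\,\EE\int_0^{T}(1+\Vert\bar w\Vert_{L^6}^6)\,dt$, the noise contribution is $\lesssim\EE\int_0^{T}\Vert\bar w\Vert_{L^3}^3\,dt$ by \eqref{EQ04} with $p=3$, and both are finished off by \eqref{EQ15} and \eqref{EQ304}; this produces exactly the dissipation functional $\sum_j\int|\nabla(|\bar w_j|^{3/2})|^2$ appearing in \eqref{EQ300}, with no need for any $L^2$-level information. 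You instead obtain that dissipation by pointwise interpolation between the $L^6$-level dissipation of \eqref{EQ15} and a separately derived $L^2$ energy estimate, and the supremum term by H\"older on the finite-measure torus. That is a legitimate alternative; what the paper's route buys is brevity and the avoidance of the auxiliary $L^2$ estimate altogether.

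Two points in your version need repair. First, the bound $\Vert\sigma(s,\bar w)\Vert_{\mathbb{L}^{2}}\le K\Vert\bar w\Vert_{L^2}$ is not available: \eqref{EQ04} is assumed only for $p=3,6$. On the torus one has $\Vert\sigma(s,\bar w)\Vert_{\mathbb{L}^{2}}\le C\Vert\sigma(s,\bar w)\Vert_{\mathbb{L}^{3}}\le CK\Vert\bar w\Vert_{L^3}$, which is \emph{not} dominated by $\Vert\bar w\Vert_{L^2}$, so your Gronwall loop in $L^2$ does not close as written; the fix is simply to bound $\Vert\bar w\Vert_{L^3}\le C\Vert\bar w\Vert_{L^6}\le C\tilde C$ using \eqref{EQ304} and integrate in time, after which Gronwall is unnecessary. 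Second, the cancellation of the convective term in the $L^2$ estimate comes from the genuine It\^o formula for $\Vert\bar w\Vert_{L^2}^2$, not from Lemma~\ref{L02}, which estimates the divergence-form drift by $\EE\int|f|^2|u|^{p-2}$ rather than exploiting the cancellation; either works here, since $\Vert\bar w\otimes\bar w\Vert_{L^2}^2\le C\Vert\bar w\Vert_{L^6}^4$ is uniformly bounded by \eqref{EQ304}, but the attribution should be corrected.
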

\colb

Lemma~\ref{T03} guarantees a unique local solution $\bar{w}$ to \eqref{EQ22} up to a positive stopping time~$\tau_{\bar{w}}$. Moreover, $\bar{w}$ has continuous trajectories, i.e., $\bar{w} \in L^6(\Omega;C([0,\tau_{\bar{w}}],L^6(\mathbb{T}^3)))$.
With a slight abuse of notation, we write 
\begin{equation}\label{EQ23}
\bar{w}=\bar{w}\mathds{1}_{[0, \tau_{\bar{w}} ]}+\bar{w}(\tau_{\bar{w}})\mathds{1}_{(\tau_{\bar{w}}, \infty)}. 
\end{equation}
Under this definition, $\bar{w} \in L^6(\Omega;C([0,T],L^6(\mathbb{T}^3)))$ for all $T>0$, and \eqref{EQ15} holds with $\tau_{\bar{w}}$ replaced by~$T$. Given this $\bar{w}$, we may consider the following 
Navier-Stokes-type system for a fixed $T$:
\begin{align}
	\begin{split}
&
		(\partial_t - \Delta)w 
		=-\mathcal{P} \left( 
		w \cdot \nabla w
		+ w \cdot \nabla \bar{w}
		+ \bar{w} \cdot \nabla w\right)
		+\left( \sigma(t,w+\bar{w})-\sigma(t,\bar{w})\right)\dot{W}(t),
		\\&
		\nabla \cdot w = 0,
		\\&
		w(0) =  w_0 	\Pas
		\commaone x\in\TT^3.
		\label{EQ25}
	\end{split} 
\end{align}
If a solution $(w, \tau_{w})$ to \eqref{EQ25} is shown to exist uniquely, then
\begin{align}
	u = \bar{w} + w
	\label{EQ26}
\end{align}
solves the original initial value problem~\eqref{EQ01} on $[0, \tau_{\bar{w}}\wedge \tau_{w}]$. In the next section, we address the local well-posedness of~\eqref{EQ25} and conclude the proof of Theorem~\ref{T01}, while in Section~\ref{sec06} we explain how to obtain the local well-posedness for the SNSE with general $H^{1/2}$ initial data.

\startnewsection{Truncated difference equation in $L^3$ and the proof of Theorem~\ref{T01}}{sec04}

We construct a unique strong solution to \eqref{EQ25} by expressing
it as  the sum of solutions of
difference equations and proceed inductively.
Note that when $\bar{w}=0$, \eqref{EQ25} reduces to the model in~\cite{KX2}, the SNSE with small $L^3$ data. In this section, we achieve the same construction for a general $\bar{w}\in L^6(\Omega;C[0,T],L^6(\mathbb{T}^3))$.

\cole
\begin{Proposition}
	\label{P01}
(Local-in-time solution with small initial data)
Let $T>0$, $w_0\in L^\infty(\Omega; L^3(\TT^3))$, $\nabla\cdot w_0=0$, and $\int_{\TT^3} w_0=0$, and assume that the assumptions~\eqref{EQ04}--\eqref{EQ05} hold.
In addition, supposed that $\bar{w} \in L^6(\Omega;C([0,T],L^6(\mathbb{T}^3)))$.
	Then there exist
	$\epsilon_0\in(0,1/2)$ independent of $\bar{w}$ and $\epsilon\in(\epsilon_0,1]$, such that if 
	\begin{equation}
		\Vert w_0\Vert_{L^{3}}\leq \epsilon_0
		, 
		\llabel{EQ87-2}
	\end{equation}
	then there exists
	a stopping time
	$\tau_w\in(0,T)$ and a unique solution $(w,\tau_w)$
	of 
	\eqref{EQ25} on $(\Omega, \mathcal{F},(\mathcal{F}_t)_{t\in[0,T]},\mathbb{P})$, with the initial condition
	$w_0$, such that
	\begin{align}
		\begin{split}
			\EE\biggl[
			\sup_{0\leq s\leq \tau}\Vert w(s,\cdot)\Vert_3^3
			+\int_0^{\tau} 
			\sum_{j=1}^3    \int_{\TT^3} | \nabla (|w_j(s,x)|^{3/2})|^2 \,dx ds
			\biggr]
			\leq 
			C\epsilon_0^{3}
			,
		\end{split}
		\label{EQ15-2}
	\end{align}
	where the constant $C$ may depend on $T$ and the size of~$\bar{w}$.
Moreover, $w$ satisfies
	\begin{align}
		\sup_\Omega \sup_{0\le t \le \tau} \Vert w(t)\Vert_{L^{3}} \le \epsilon
		.\label{EQ303}
	\end{align}
\end{Proposition}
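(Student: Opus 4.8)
The plan is to build $w$ as an infinite sum $w=\sum_{k\geq 0}v^{(k)}$ of solutions to a hierarchy of Navier--Stokes-type difference equations, mimicking the construction of~\cite{KX2} but now carrying the extra drift and noise contributions coming from $\bar w$. First I would apply Lemma~\ref{L05} to decompose $w_0=\sum_{k\geq 0}v_0^{(k)}$ with $\Vert v_0^{(0)}\Vert_{L^\infty_\omega L^3_x}\leq 2\epsilon_0$ and $\Vert v_0^{(k)}\Vert_{L^\infty_\omega L^3_x}\leq \epsilon_0/4^k$, each $v_0^{(k)}$ divergence-free and average-free. The zeroth equation is the linear-plus-$\bar w$-coupled system for $v^{(0)}$ with data $v_0^{(0)}$ (so its nonlinearity is only $\mathcal P(v^{(0)}\cdot\nabla v^{(0)})$ together with the cross terms $v^{(0)}\cdot\nabla\bar w+\bar w\cdot\nabla v^{(0)}$ and noise $\sigma(t,v^{(0)}+\bar w)-\sigma(t,\bar w)$), and the $k$-th equation ($k\geq 1$) has data $v_0^{(k)}$, with the previously constructed partial sum $V^{(k-1)}:=\sum_{j<k}v^{(j)}$ playing the role of a known coefficient; schematically $(\partial_t-\Delta)v^{(k)}=-\mathcal P\big(v^{(k)}\cdot\nabla v^{(k)}+(V^{(k-1)}+\bar w)\cdot\nabla v^{(k)}+v^{(k)}\cdot\nabla(V^{(k-1)}+\bar w)\big)+(\sigma(t,V^{(k)}+\bar w)-\sigma(t,V^{(k-1)}+\bar w))\dot W$. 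To make each step solvable I would truncate the equations by cutoff functions in the $L^6_x$ norm of $\bar w$ (using \eqref{EQ304}, which is a deterministic bound on $\bar w$, so in fact no cutoff in $\bar w$ is needed, only the stopping time $\tau_{\bar w}$) and by cutoffs/stopping times controlling $\Vert V^{(k-1)}\Vert_{L^3_x}$ and $\Vert v^{(k)}\Vert_{L^3_x}$, solving each truncated system by the fixed-point/Galerkin machinery already available for subcritical $L^3$-data problems, then removing the truncation up to the first exit time.

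Next I would establish the key $L^3$ energy estimate for each level. Applying Itô's formula in $L^3$ to $\Vert v^{(k)}(t)\Vert_{L^3}^3$ (i.e., to $\int |v^{(k)}_j|^3$), the good term $\int_0^t\sum_j\int |\nabla(|v^{(k)}_j|^{3/2})|^2$ appears from the Laplacian; the nonlinear self-interaction $v^{(k)}\cdot\nabla v^{(k)}$ contributes a term that is absorbed provided $\Vert v^{(k)}\Vert_{L^3}$ is small (this is the standard $L^3$-critical smallness mechanism, via Sobolev $\dot W^{1,2}\hookrightarrow L^6$ applied to $|v^{(k)}_j|^{3/2}$); the cross terms with $V^{(k-1)}+\bar w$ are estimated by Hölder and Young, splitting into a part absorbed by the dissipation and a part of the form $C(\Vert V^{(k-1)}\Vert_{L^6}+\Vert\bar w\Vert_{L^6})^2\Vert v^{(k)}\Vert_{L^3}^3$ handled by Grönwall on $[0,\tau_{\bar w}\wedge T]$; the noise difference, by \eqref{EQ04}, satisfies $\Vert\sigma(t,V^{(k)}+\bar w)-\sigma(t,V^{(k-1)}+\bar w)\Vert_{\mathbb L^3}\leq K\Vert v^{(k)}\Vert_{L^3}$, so after BDG and Young it also feeds a Grönwall term proportional to $K^2\Vert v^{(k)}\Vert_{L^3}^3$ with a small remainder absorbed by dissipation. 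The upshot, after choosing $\epsilon_0$ small (depending only on absolute constants and $K$, not on $\bar w$) and possibly shrinking the stopping time $\tau_w\leq\tau_{\bar w}$, is a bound $\mathbb E[\sup_{s\leq\tau_w}\Vert v^{(k)}\Vert_{L^3}^3+\int_0^{\tau_w}\sum_j\int|\nabla(|v^{(k)}_j|^{3/2})|^2]\leq C\Vert v_0^{(k)}\Vert_{L^\infty_\omega L^3_x}^3\leq C\epsilon_0^3/4^{3k}$, together with the pathwise bound $\sup_\Omega\sup_{t\leq\tau_w}\Vert v^{(k)}(t)\Vert_{L^3}\leq \epsilon/4^k$ (the pathwise bound obtained from the same inequality applied on the stopped trajectory, or from the defining stopping time of the truncation, after checking that the exit time is not reached). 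Geometric summability in $k$ of these bounds then shows $w:=\sum_k v^{(k)}$ converges in $L^3(\Omega;C([0,\tau_w],L^3))$ and in the energy space, that $\sup_\Omega\sup_{t\leq\tau_w}\Vert w(t)\Vert_{L^3}\leq\epsilon$ (choosing $\epsilon$ slightly larger than $\epsilon_0$ to absorb $\sum 1/4^k$), and that $w$ solves \eqref{EQ25} by passing to the limit in the weak formulation \eqref{EQ16}, yielding \eqref{EQ15-2} and \eqref{EQ303}.

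For uniqueness I would take two solutions $w,\tilde w$ with the properties in the statement, set $W=w-\tilde w$, and apply Itô in $L^3$ to $\Vert W\Vert_{L^3}^3$: the self-interaction and the $\bar w$-cross terms produce, after Hölder/Young, a dissipation term, a Grönwall term $C(1+\Vert\bar w\Vert_{L^6}^2)\Vert W\Vert_{L^3}^3$, and a term $C(\Vert w\Vert_{L^3}+\Vert\tilde w\Vert_{L^3})\cdot(\text{dissipation of }W)$ which, because $\Vert w\Vert_{L^3},\Vert\tilde w\Vert_{L^3}\leq\epsilon$ with $\epsilon$ small, is absorbed; the noise difference by \eqref{EQ04} again gives only $K^2\Vert W\Vert_{L^3}^3$ plus absorbable remainder. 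Grönwall on $[0,\tau_w\wedge\tilde\tau_w]$, using $\int_0^{\tau_{\bar w}}\Vert\bar w\Vert_{L^6}^2\,ds<\infty$, forces $W\equiv 0$. The main obstacle I anticipate is the $L^3$ Itô calculus itself: $x\mapsto |x|^3$ is only $C^2$ (and the Leray projector interacts awkwardly with the pointwise nonlinearity), so one must work with Galerkin truncations, justify the Itô formula for $\Vert\cdot\Vert_{L^3}^3$ via the finite-dimensional Itô formula and a limiting argument, and carefully track that the Itô correction term $\tfrac12\int\sum_j |v^{(k)}_j|\,\Vert(\sigma(\cdot)-\sigma(\cdot))_j\Vert_{l^2}^2$ is controlled by \eqref{EQ04} and absorbed — this is exactly the point where the lack of Hilbert structure bites, and it is the technical heart shared with~\cite{KX2} and~\cite{KXZ}; once that machinery is in place, the presence of $\bar w$ only adds lower-order, Grönwall-type contributions thanks to the deterministic $L^6$ bound \eqref{EQ304}.
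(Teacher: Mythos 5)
Your construction follows the paper's proof essentially step for step: the same decomposition of $w_0$ via Lemma~\ref{L05}, the same hierarchy of difference equations \eqref{EQ404}, truncation by cutoffs, uniform-in-$k$ $L^3$ and $L^6$ energy estimates via the It\^o-type formula of Lemma~\ref{L02} (Lemma~\ref{L11}), pointwise bounds forced by the cutoffs (Lemma~\ref{L09}), and geometric summation. Two points in your sketch, however, need correction or a real argument.

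First, your parenthetical claim that ``no cutoff in $\bar w$ is needed, only the stopping time $\tau_{\bar w}$'' is wrong at the level of generality of Proposition~\ref{P01}: there $\bar w$ is only assumed to lie in $L^6(\Omega;C([0,T],L^6(\TT^3)))$, which is a moment bound, so $\Vert\bar w(t)\Vert_{L^6}$ admits no pathwise bound and no stopping time with the property \eqref{EQ304} is available. The cross terms $\mathcal{P}(v^{(k)}\cdot\nabla\bar w)$ and $\mathcal{P}(\bar w\cdot\nabla v^{(k)})$ produce, after H\"older and Young, a Gr\"onwall coefficient of order $\Vert\bar w\Vert_{L^6}^4$ inside the expectation, which cannot be closed without the cutoff $\psi_{\bar w}$ of \eqref{cutoff2} (or an equivalent stopping time on $\Vert\bar w\Vert_{L^6}$ that you would then have to introduce and intersect with $\tau_w$); this is precisely the origin of the factor $K_1^4$ in \eqref{EQ413}, and the same cutoff is what lets the fixed-point argument of Lemma~\ref{L10} reach an arbitrary $T$. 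Relatedly, you omit the $L^6$ cutoffs $\psi_k$ on $v^{(k)}$ itself, which are needed both for the $L^6$ fixed point and to guarantee finiteness of the right-hand side before absorbing terms.

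Second, the step you dispose of with ``after checking that the exit time is not reached'' is the one place the construction could genuinely fail: the limiting stopping time $\tau_w$ in \eqref{EQ70} is the infimum of infinitely many stopping times, and such an infimum can be zero even though each term is positive almost surely. The paper's Lemma~\ref{L08} supplies the missing argument: Markov's inequality combined with the uniform energy bounds gives $\PP(\tau^k<\delta)\le C\delta$ with $C$ independent of $k$, the geometric decay of $\Vert v_0^{(k)}\Vert$ against the thresholds $\epsilon_1/2^k$ and $M_k$ being exactly what makes the sum over $j\le k$ converge; letting $\delta\to0$ yields $\PP(\tau_w>0)=1$. Your geometrically decaying energy bounds are the correct input to this argument, so the gap is fillable, but it is not automatic and must be carried out.
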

\colb

Using Lemma~\ref{L05}, we write the initial data as a sum of smaller regular components, 
\begin{align}
	w_0 = \sum_{k\in \mathbb{N}_0} v_0^{(k)}
\inin{L^{3}_x},
	\label{EQ400}
\end{align}
which holds $\PP$-almost surely. Moreover,
\begin{equation}
	\sup_{\Omega} \Vert v_{0}^{(0)}\Vert_{L^3_x}
	\leq 2\epsilon_0<1,
	\label{EQ401}
 \end{equation}
and
\begin{equation}
	\sup_{\Omega}\Vert v_{0}^{(k)}\Vert_{L^3_x}
	\leq \frac{\epsilon_0}{4^{k}}
	\comma k\in \mathbb{N}.
	\label{EQ402}
\end{equation}
We expect the solution of~\eqref{EQ25} to assume the form
\begin{align}
	w = \sum_{k\in \mathbb{N}_0} v^{(k)}
	,\label{EQ403-1}
\end{align}
where $v^{(k)}$ solves
\begin{align}
	\begin{split} 
		&\partial_t \vk  -\Delta \vk
		\\&\indeq
		=
		-
		\mathcal{P}\bigl( \vk\cdot \nabla \vk\bigr)
		-
		\mathcal{P}\bigl( w^{(k-1)}\cdot \nabla \vk\bigr)
		-
		\mathcal{P}\bigl( \vk\cdot \nabla w^{(k-1)}\bigr)
		-
		\mathcal{P}\bigl( v^{(k)}\cdot \nabla \bar{w}\bigr)
		-
		\mathcal{P}\bigl( \bar{w}\cdot \nabla v^{(k)}\bigr)
		\\&\indeq\indeq
		+
		\sigma(t, v^{(k)} + w^{(k-1)} + \bar{w})\dot{\WW}(t)
		-
		\sigma(t, w^{(k-1)} + \bar{w})\dot{\WW}(t),
		\\&   \nabla\cdot v^{(k)}( t,x) = 0
		,
		\\&
		v^{(k)}( 0,x)=v_{0}^{(k)} (x)  \Pas
		\commaone x\in\TT^3
		,
	\end{split}
	\label{EQ404}
\end{align}
with 
\begin{equation}\label{EQ403}
	w^{(-1)}=0	\andand w^{(k-1)} = \sum_{l=0}^{k-1} v^{(l)} \comma k\in \mathbb{N}.
\end{equation}
This is because summing \eqref{EQ404} over $k$ (formally) yields the system~\eqref{EQ25} with initial datum~$w_0$. To construct $v^{(k)}$, we consider the truncated system
\begin{align}
	\begin{split}
		&\partial_t \vk  -\Delta \vk
		\\&\indeq
		=
		-
		\psi_k^2 \phi_k^2
		\mathcal{P}\bigl( \vk\cdot \nabla \vk\bigr)
		-
		\psi_k^2 \phi_k^2 \zeta_{k-1}
		\mathcal{P}\bigl( w^{(k-1)}\cdot \nabla \vk\bigr)
		-
		\psi_k^2 \phi_k^2 \zeta_{k-1}
		\mathcal{P}\bigl( \vk\cdot \nabla w^{(k-1)}\bigr)
		\\&\indeq\indeq
		-
		\psi_k^2 \phi_k^2 \psi_{\bar{w}}
		\mathcal{P}\bigl( v^{(k)}\cdot \nabla \bar{w}\bigr)
		-
		\psi_k^2 \phi_k^2 \psi_{\bar{w}}
		\mathcal{P}\bigl( \bar{w}\cdot \nabla v^{(k)}\bigr)
		\\&\indeq\indeq
		+
		\psi_k^2 \phi_k^2 \zeta_{k-1} \psi_{\bar{w}}
		\sigma(t, v^{(k)} + w^{(k-1)} + \bar{w})\dot{\WW}(t)
		-
		\psi_k^2 \phi_k^2 \zeta_{k-1} \psi_{\bar{w}}
		\sigma(t, w^{(k-1)} + \bar{w})\dot{\WW}(t),
		\\&   \nabla\cdot v^{(k)}( t,x) = 0
		,
		\\&
		v^{(k)}( 0,x)=v_{0}^{(k)} (x)  \Pas
		\commaone x\in\TT^3
		.
	\end{split}
	\label{EQ405}
\end{align}
In~\eqref{EQ405}, the functions $\psi_k$, $\phi_{k}$, and $\zeta_{k-1}$ are cutoff functions, given by
\begin{equation}
	\psi_k
	:=
	\theta\left( \frac{1}{M_k}\Vert v^{(k)}(t, \cdot)\Vert_{L^6}\right)
	\andand
	\phi_{k}
	:=
	\theta\left(
	\frac{2^k}{\epsilon_1}
	\Vert v^{(k)}(t, \cdot)\Vert_{L^3}
	\right)
	,
	\label{cutoff1}
\end{equation}
and 
\begin{equation}
	\zeta_{k-1}
	:=\mathds{1}_{k=0}+\mathds{1}_{k>0}\Pi_{i=0}^{k-1}\psi_i
	\andand
	\psi_{\bar{w}}
	:=
	\theta\left( \frac{1}{K_1}\Vert \bar{w}(t, \cdot)\Vert_{L^6}\right),
		\label{cutoff2}
\end{equation}
where $\theta\colon[0,\infty)\to [0,1]$ is a smooth function with $\theta\equiv 1$ on $[0,1]$ and 
$\theta\equiv 0$ on~$[2,\infty)$.
In addition, the constants $M_k,~\epsilon_1$ are positive values to be determined 
(see the proofs of Lemmas~\ref{L11}--\ref{L08} and the proof of uniqueness in Theorem~\ref{T01}), and $K_1$
is any fixed number greater than~$2K_0$,
where $K_0$ was introduced in~\eqref{EQ22}.
Now, we fix $T>0$ and present a lemma asserting the existence 
of a unique strong solution to~\eqref{EQ405} for any $k \in
\mathbb{N}_0$.

\cole
\begin{Lemma}[An $L^{6}$ solution]
	\label{L10}
	Let $k\in\mathbb{N}_0$ be fixed, let
	$\vk_0$ and $\bar{w}$ be as above,
	and assume that $v^{(0)}, v^{(1)}, \ldots, v^{(k-1)}$
	are given.
	Then~\eqref{EQ405} has a unique strong solution
	$\vk\in L^6(\Omega; C([0,T], L^6(\TT^3)))$, which
	satisfies
	\begin{align}
		\begin{split}
			&\EE\biggl[\sup_{0\leq t\leq T}\Vert\vk(t,\cdot)\Vert_6^6+\int_0^{T}\sum_{j=1}^{3} \int_{\TT^3} | \nabla (|\vk_j(t,x)|^{3})|^2 \,dx dt\biggr]
			\leq C
			\EE\biggl[
			\Vert\vk_0\Vert_6^6
			\biggr]+C
			,
		\end{split}
		\llabel{EQ406}
	\end{align}
	where the constant $C$ depends on $T$, $k$, $\epsilon_1$, and $K_1$ (see \eqref{cutoff1} and~\eqref{cutoff2}). 
\end{Lemma}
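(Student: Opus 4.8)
\textbf{Proof proposal for Lemma~\ref{L10}.} The plan is to treat \eqref{EQ405} as a semilinear stochastic parabolic equation with globally Lipschitz (in the relevant norms) and bounded nonlinear coefficients, thanks to the cutoff functions, and then apply a standard fixed-point/Galerkin construction followed by an energy estimate in $L^6$. First I would observe that each cutoff $\psi_k$, $\phi_k$, $\zeta_{k-1}$, $\psi_{\bar w}$ takes values in $[0,1]$ and vanishes once the corresponding norm exceeds a fixed threshold; since $v^{(0)},\dots,v^{(k-1)}$ and $\bar w$ are already given and lie in $L^6(\Omega;C([0,T],L^6))$, the products $\psi_k^2\phi_k^2\zeta_{k-1}$ etc.\ are progressively measurable scalar processes in $[0,1]$, and on the support of $\psi_k^2\phi_k^2$ one has the a priori bounds $\|v^{(k)}\|_{L^6}\le 2M_k$ and $\|v^{(k)}\|_{L^3}\le 2^{1-k}\epsilon_1$. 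Hence every term on the right of \eqref{EQ405} is, as a map of $v^{(k)}$, Lipschitz from $L^6$ (or the natural intermediate space) into $H^{-1}$-type target spaces with constants depending only on $T,k,\epsilon_1,K_1$ and on the fixed data $\bar w, v^{(0)},\dots,v^{(k-1)}$; in particular the truncated bilinear terms $\psi_k^2\phi_k^2\mathcal P(v^{(k)}\cdot\nabla v^{(k)})$ become globally Lipschitz because the cutoff kills the quadratic growth.

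Next I would set up the existence proof itself. The cleanest route is a Galerkin approximation: project \eqref{EQ405} onto the span of the first $n$ Fourier modes, obtaining a finite-dimensional SDE with locally Lipschitz coefficients; the cutoffs make the coefficients globally Lipschitz and of linear growth, so the finite-dimensional system has a unique global strong solution $v^{(k)}_n$. Then I would derive the uniform-in-$n$ energy estimate by applying the It\^o formula to $\|v^{(k)}_n(t)\|_{L^6}^6$ (as in \cite{KX2,KXZ}); the deterministic terms produce the good dissipation $\int \sum_j|\nabla(|v^{(k)}_{n,j}|^3)|^2$, the truncated nonlinear and $\bar w$-coupling terms are absorbed using the cutoff bounds plus Young's and Gagliardo--Nirenberg inequalities, the It\^o correction term from $\sigma$ is controlled by \eqref{EQ04}--\eqref{EQ05} and the Lipschitz bound $\|\sigma(t,v^{(k)}+w^{(k-1)}+\bar w)-\sigma(t,w^{(k-1)}+\bar w)\|_{\mathbb L^6}\le K\|v^{(k)}\|_{L^6}$, and the martingale part is handled with Burkholder--Davis--Gundy followed by absorption. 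This yields
\[
\EE\Bigl[\sup_{0\le t\le T}\|v^{(k)}_n(t)\|_{L^6}^6+\int_0^T\sum_{j=1}^3\int_{\TT^3}|\nabla(|v^{(k)}_{n,j}|^3)|^2\,dx\,dt\Bigr]\le C\EE[\|v^{(k)}_0\|_{L^6}^6]+C
\]
with $C=C(T,k,\epsilon_1,K_1)$ independent of $n$. Standard compactness (Aubin--Lions in the space variable, tightness in $C([0,T],L^6_{\mathrm{w}})$ or a stochastic compactness argument) together with the Lipschitz structure then lets me pass to the limit $n\to\infty$ and obtain a solution $v^{(k)}\in L^6(\Omega;C([0,T],L^6(\TT^3)))$ satisfying the stated bound; lower semicontinuity of the norms preserves the estimate.

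For uniqueness I would take two solutions $v^{(k)},\tilde v^{(k)}$ with the same initial data, set $V=v^{(k)}-\tilde v^{(k)}$, and apply It\^o's formula to a suitable functional of $V$—most naturally $\|V(t)\|_{L^2}^2$, since at the level of differences one does not need the $L^6$ structure and the $L^2$ computation is cleaner. All coefficient differences are Lipschitz in the appropriate norms with deterministic constants (again because of the cutoffs and the given regularity of $\bar w, w^{(k-1)}$), so one obtains $d\|V\|_{L^2}^2\le C(t)\|V\|_{L^2}^2\,dt+dM_t$ with $\EE\int_0^T C(t)\,dt<\infty$ (here $C(t)$ involves $\|\bar w(t)\|_{L^6}^2$, $\|w^{(k-1)}(t)\|_{L^6}^2$, and the gradient terms of the already-constructed lower iterates, all finite in expectation), and a stochastic Gronwall argument gives $V\equiv 0$. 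The main obstacle is the energy estimate: one must carefully track how the $\bar w\cdot\nabla v^{(k)}$ and $v^{(k)}\cdot\nabla\bar w$ terms, together with the $\sigma$ difference, interact with the $L^6$ It\^o expansion so that the potentially large $L^6$ norm of $\bar w$ (bounded by $2K_1$ on the relevant set) is absorbed into the dissipation without blowing up the constant, and so that no term of supercritical growth survives the cutoffs; the rest is a routine adaptation of the arguments in \cite{KX2} and \cite[Theorem~3.1]{KXZ}.
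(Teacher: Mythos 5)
The paper disposes of this lemma in two sentences, by observing that the truncated system~\eqref{EQ405} falls under the fixed-point (contraction-mapping) argument of \cite[Lemma~3.2]{KX2}: one works with the mild formulation on a short time interval $[0,T_0]$, where the heat-semigroup smoothing supplies a factor $T_0^{\alpha}$ that makes the truncated nonlinearity a contraction on a ball of $L^6(\Omega;C([0,T_0],L^6))$, and then iterates to reach any $T$ since no smallness of the data is used and the $\bar w$-terms carry the cutoff $\psi_{\bar w}$. Your Galerkin-plus-compactness construction followed by an energy estimate is a genuinely different (and heavier) route; it is workable in principle, and your $L^6$ energy estimate is essentially the computation the paper performs later in Lemma~\ref{L11}, but it makes the whole construction hinge on your pathwise uniqueness step (both to identify the limit and to upgrade a martingale solution to a probabilistically strong one via Gy\"ongy--Krylov or Yamada--Watanabe).

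That uniqueness step is where you have a real gap. You assert that "all coefficient differences are Lipschitz in the appropriate norms," but the cutoffs $\psi_k,\phi_k$ depend on the unknown itself, so the difference of the drifts contains terms of the form $\bigl(\psi_k^2\phi_k^2-\tilde\psi_k^2\tilde\phi_k^2\bigr)\mathcal{P}(v^{(k)}\cdot\nabla v^{(k)})$ in which the \emph{uncut} bilinear form is multiplied by a Lipschitz difference of order $M_k^{-1}\Vert V\Vert_{L^6}+2^k\epsilon_1^{-1}\Vert V\Vert_{L^3}$. Testing against $V$ in $L^2$ and using only the $L^6$ bound available on the support of the cutoff gives
\begin{align*}
\bigl|\langle \mathcal{P}(v^{(k)}\cdot\nabla v^{(k)}),V\rangle\bigr|\le \Vert v^{(k)}\Vert_{L^4}^2\Vert\nabla V\Vert_{L^2}\le C M_k^2\Vert\nabla V\Vert_{L^2},
\end{align*}
which, multiplied by $M_k^{-1}\Vert V\Vert_{L^6}\lesssim M_k^{-1}\Vert\nabla V\Vert_{L^2}$, yields $CM_k\Vert\nabla V\Vert_{L^2}^2$ --- a multiple of the dissipation that cannot be absorbed for large $M_k$, so the Gr\"onwall closure fails as stated. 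This can be repaired, but not for free: one must interpolate $\Vert v^{(k)}\Vert_{L^4}\le \Vert v^{(k)}\Vert_{L^3}^{1/2}\Vert v^{(k)}\Vert_{L^6}^{1/2}$ so that \emph{both} cutoffs in~\eqref{cutoff1} are used simultaneously (producing a factor $\epsilon_1 2^{-k}$ in place of $M_k$), and one must split the difference of the truncated drifts symmetrically and localize with stopping times so that the uncut bilinear form is always evaluated at the member of the pair whose norms the surviving cutoff actually controls. None of this is automatic, and it is precisely the bookkeeping that the mild-formulation contraction of \cite{KX2} avoids, since there the small factor comes from the time integral rather than from absorption into the dissipation. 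Either adopt that route, or carry out the interpolation-and-splitting argument explicitly; as written, the uniqueness (and hence the identification of the Galerkin limit) is not established.
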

\colb

The proof of Lemma~\ref{L10} is almost identical to that of~\cite[Lemma~3.2]{KX2}. The new terms involving $\bar{w}$ are compatible with the fixed point argument presented there, thanks to~\eqref{EQ23} and the regularity of~$\bar{w}$. In addition, we can reach any $T>0$ since this fixed point argument does not rely on any
smallness assumption, and the terms involving $\bar{w}$ include the cutoff~$\psi_{\bar{w}}$.

Now, we establish uniform-in-$k$ bounds for the solutions of~\eqref{EQ405}.

\cole
\begin{Lemma}[An $L^{p}$-energy control]
	\label{L11}
	Let $T>0$, $\epsilon>0$, $p\in\{3,6\}$, and $k\in\mathbb{N}_0$. Suppose that $\vk_0$ is an initial datum of~\eqref{EQ405} satisfying \eqref{EQ400}--\eqref{EQ402}; additionally, assume that $\Vert w^{(k-1)}\Vert_3\leq\epsilon$ in $\Omega\times [0,T]$. If $\epsilon$ and $\epsilon_1$ (see \eqref{cutoff1}) are sufficiently small, then 
	\begin{align}
		\EE \left[\sup_{t\in[0, T ]} \Vert \vk(t)\Vert_p^p
		+\int_0^{ T }\sum_{j=1}^{3} \int_{\TT^3} | \nabla (|\vk_j(t,x)|^{p/2})|^2 \,dx dt
		\right]
		\leq C_{ T }\EE[\Vert \vk_0\Vert_p^p],
		\label{EQ407}
	\end{align}
	where $C>0$ is independent of~$k$.
\end{Lemma}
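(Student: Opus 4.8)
The plan is to apply the It\^o formula in $L^p$ (for $p\in\{3,6\}$) to the truncated equation~\eqref{EQ405}, integrate in space, and track each term, exploiting the cutoffs $\psi_k,\phi_k,\zeta_{k-1},\psi_{\bar w}$ to control the nonlinear and noise contributions by small constants times the gradient term on the left. Concretely, writing $F_j=|\vk_j|^{p/2}$, the It\^o-in-$L^p$ calculation (see \cite[Lemma~3.2]{KX2}) yields an identity of the schematic form
\begin{align}
\begin{split}
\Vert \vk(t)\Vert_p^p + c\int_0^t\sum_{j}\int_{\TT^3}|\nabla F_j|^2\,dx\,ds
&= \Vert \vk_0\Vert_p^p
 + \int_0^t (\text{cubic nonlinearity})\,ds
\\&\indeq
 + \int_0^t (\text{stochastic integral})
 + \int_0^t (\text{It\^o correction})\,ds.
\end{split}
\llabel{EQ-plan}
\end{align}
First I would handle the self-interaction $\psi_k^2\phi_k^2\,\mathcal P(\vk\cdot\nabla\vk)$: this is exactly the term treated in~\cite{KX2}, and the cutoff $\phi_k$ forces $\Vert\vk\Vert_{L^3}\le 2\epsilon_1 2^{-k}$ on the support of the integrand, so by the Sobolev embedding $\dot H^1\hookrightarrow L^6$ and interpolation one bounds it by $C\epsilon_1\int|\nabla F_j|^2$, absorbable into the left side once $\epsilon_1$ is small. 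Next, the cross terms $\psi_k^2\phi_k^2\zeta_{k-1}\,\mathcal P(w^{(k-1)}\cdot\nabla\vk)$ and $\psi_k^2\phi_k^2\zeta_{k-1}\,\mathcal P(\vk\cdot\nabla w^{(k-1)})$: here the hypothesis $\Vert w^{(k-1)}\Vert_{L^3}\le\epsilon$ on $\Omega\times[0,T]$ plays the role of smallness, and after an integration by parts (using $\nabla\cdot w^{(k-1)}=0$ where appropriate) one estimates these by $C\epsilon\int|\nabla F_j|^2$, again absorbable for $\epsilon$ small. The genuinely new terms are $\psi_k^2\phi_k^2\psi_{\bar w}\,\mathcal P(\vk\cdot\nabla\bar w)$ and $\psi_k^2\phi_k^2\psi_{\bar w}\,\mathcal P(\bar w\cdot\nabla\vk)$: the cutoff $\psi_{\bar w}$ guarantees $\Vert\bar w\Vert_{L^6}\le 2K_1$ on its support, but $\bar w$ is merely $L^6$, not small; the point (as emphasized in the introduction, "the potentially large $L^6$-norm of $\bar w$ does not have a detrimental impact on the $L^3$ dynamics") is that when $p=3$ these terms still carry the factor $\phi_k$, hence a factor of $\Vert\vk\Vert_{L^3}\le C\epsilon_1$, so after H\"older's inequality ($L^6\times L^6\times L^6$, matching $\bar w$, $\vk^{1/2}\cdot$ something, and $\nabla F_j$) and Young's inequality they are bounded by $C\epsilon_1 K_1\int|\nabla F_j|^2$ plus a lower-order term $C(K_1)\Vert\vk\Vert_{L^3}^3$, which is handled by Gr\"onwall. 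For $p=6$, which is needed to keep $\psi_k$ meaningful, one uses $\psi_k$ (bounding $\Vert\vk\Vert_{L^6}\le 2M_k$) together with $\phi_k$; the $\bar w$-terms are then estimated by H\"older and Young at the cost of a constant depending on $M_k,K_1$, but crucially the self-interaction and the $w^{(k-1)}$-terms still produce only small multiples of the gradient, so the estimate closes with a $k$-independent absorption constant even if the residual Gr\"onwall constant $C_T$ is $k$-dependent through lower-order terms — and re-examining, since the lemma claims $C$ independent of $k$, one must check that the lower-order contributions are controlled using $\Vert\vk_0\Vert_p^p$ alone, which follows because the bad constants multiply only $\int_0^t\Vert\vk\Vert_p^p\,ds$, to which Gr\"onwall applies with $e^{C_T T}$ independent of $k$.

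For the stochastic term $\int_0^t(\sigma(s,\vk+w^{(k-1)}+\bar w)-\sigma(s,w^{(k-1)}+\bar w))\,d\WW$ paired against $|\vk|^{p-2}\vk$, I would take expectations, use the Burkholder--Davis--Gundy inequality quoted in the preliminaries together with the Lipschitz bound~\eqref{EQ04}, which gives $\Vert\sigma(s,\vk+\cdot)-\sigma(s,\cdot)\Vert_{\mathbb L^p}\le K\Vert\vk\Vert_{L^p}$; then a standard splitting $\EE\sup(\cdots)\le \tfrac12\EE\sup\Vert\vk\Vert_p^p + C\EE\int_0^t\Vert\vk\Vert_p^p\,ds$ absorbs half the supremum and leaves a Gr\"onwall term. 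The It\^o correction term is of the form $\int_0^t\int|\vk|^{p-2}\Vert(\sigma(\vk+\cdot)-\sigma(\cdot))\,\cdot\Vert_{l^2}^2$, again bounded via~\eqref{EQ04} by $CK^2\int_0^t\Vert\vk\Vert_p^p\,ds$. Combining, taking the supremum in $t$ and then expectations, absorbing the small gradient multiples and the half-supremum, and finally applying Gr\"onwall's inequality in the integrated form, yields~\eqref{EQ407} with $C_T=C e^{C K^2 T}$, manifestly independent of~$k$.

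The main obstacle I anticipate is verifying the $k$-independence of the constant in the presence of the new $\bar w$-terms: one must be careful that every place where a factor $M_k$ or $2^{-k}$ enters is compensated so that the coefficient multiplying $\int|\nabla F_j|^2$ on the right stays uniformly below the coercivity constant $c$, and that $M_k$ does not sneak into the Gr\"onwall constant in a way that depends on $k$. The resolution is that the $\bar w$-terms, after using the $\phi_k$-cutoff in the $L^3$ estimate, always produce a factor $\epsilon_1$ (not $M_k$), so choosing $\epsilon_1$ and $\epsilon$ small — depending only on the absolute constants $K$, the Sobolev constant, and $K_1$, but not on $k$ — closes the argument; the $L^6$ estimate, where $M_k$ does appear in intermediate steps, is only used to justify the well-posedness of~\eqref{EQ405} (Lemma~\ref{L10}) and to give the $\psi_k$ cutoff meaning, while the $k$-uniform bound~\eqref{EQ407} for $p=6$ follows because $\psi_k$ itself forces $\Vert\vk\Vert_{L^6}\le 2M_k$ only on its support and the genuinely dangerous terms retain smallness from $\epsilon,\epsilon_1$.
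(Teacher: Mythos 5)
Your proposal follows essentially the same route as the paper: the paper applies its stochastic-heat-equation energy estimate (Lemma~\ref{L02}) to \eqref{EQ405} written in divergence form, bounds the convective terms by $C(\epsilon+\epsilon_1)\EE\bigl[\int_0^T\Vert\vk\Vert_{L^{3p}}^p\,dt\bigr]$, treats the $\bar w$-terms by H\"older, interpolation of $L^{3p/2}$ between $L^{p}$ and $L^{3p}$, and Young's inequality with a free small parameter $\eta$ (the cutoff $\psi_{\bar w}$ caps $\Vert\bar w\Vert_{L^{6}}$ by $2K_1$, so the resulting Gr\"onwall constant is $C_\eta(K_1^4+1)$ and never involves $M_k$ --- which cleanly settles the $k$-independence you worry about for $p=6$), absorbs the $L^{3p}$ contributions into the dissipation via the Poincar\'e-type inequality \eqref{EQ14}, and closes with Gr\"onwall. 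Your estimates agree in substance; the one point to tighten is that the smallness in front of the gradient term for the $\bar w$-contributions comes from the Young parameter $\eta$ rather than from a factor $\epsilon_1$ harvested from $\phi_k$, and correspondingly no constant depending on $M_k$ ever enters the energy inequality.
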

\colb

To prove Lemma~\ref{L11}, we require the following estimate for an established solution to the stochastic heat equation.

\cole
\begin{Lemma}[{\cite[Lemma~2.2]{KX2}}]
	\label{L02}
	Let $0<T<\infty$ and $2\leq p<\infty$. Suppose that $u_0\in L^p(\Omega; L^p(\mathbb{T}^3))$ and that $u \in L^p(\Omega; C([0,T], L^p(\mathbb{T}^3)))$ is a probabilistically strong solution to the stochastic heat equation
	\begin{align}
		\begin{split}
			&
			(\partial_t-\Delta)\uu
			=\nabla \cdot f + g\dot{\WW}(t),
			\\
			&
			\uu( 0,x)= \uu_0 ( x)
			\Pas.
		\end{split}
		\label{EQ18} 
	\end{align}
	In addition, assume that $|f|^2 |u|^{p-2} \in L^1(\Omega\times[0,T], \mathbb{L}^1(\mathbb{T}^3))$  
	and $g\in L^p(\Omega\times[0,T], \mathbb{L}^p(\mathbb{T}^3))$. 
	Then, $u$ satisfies
	\begin{align}
		\begin{split}
			&\EE\biggl[\sup_{0\leq t\leq T}\Vert\uu(t,\cdot)\Vert_{L^p}^p-\Vert\uu_0\Vert_{L^p}^p+\int_0^{T} \sum_{j=1}^3 
			\int_{\mathbb{T}^3} | \nabla (|\uu_j(t,x)|^{p/2})|^2 \,dx dt\biggr]
			\\&\indeq
			\leq C
			\EE\biggl[
			{
				\int_0^{T} \int_{\mathbb{T}^3}|f(t,x)|^2 |u(t,x)|^{p-2}\,dxdt
			}
			{
				+ \int_0^{T}\int_{\mathbb{T}^3} |\uu(t)|^{p-2} \Vert g(t,x)\Vert_{l^2}^2\,dxdt
			}
			\biggr],
		\end{split}
		\label{EQ20}
	\end{align}
	where $C$ is a positive constant that depends only on~$p$.
\end{Lemma}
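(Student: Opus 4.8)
The plan is to apply It\^o's formula to the functional $t\mapsto\|u(t)\|_{L^p}^p=\sum_{j=1}^3\int_{\TT^3}|u_j(t,x)|^p\,dx$, working one component at a time since the components of the linear equation~\eqref{EQ18} decouple, and then to absorb all unfavorable terms into the dissipation integral on the left of~\eqref{EQ20}. Since $\Phi(r)=|r|^p$ is $C^2$ on $\RR$ for $p\ge2$ with $\Phi''(r)=p(p-1)|r|^{p-2}$, It\^o's formula for $\int_{\TT^3}|u_j|^p\,dx$ --- with $du_j=(\Delta u_j+(\nabla\cdot f)_j)\,dt+g_j\,d\WW$ and quadratic variation $\|g_j\|_{l^2}^2\,dt$ --- yields the drift $\int p|u_j|^{p-2}u_j\bigl(\Delta u_j+(\nabla\cdot f)_j\bigr)\,dx$, the It\^o correction $\tfrac{p(p-1)}{2}\int|u_j|^{p-2}\|g_j\|_{l^2}^2\,dx$, and a local martingale $M_j$.

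For the drift, integration by parts with $\nabla(|u_j|^{p-2}u_j)=(p-1)|u_j|^{p-2}\nabla u_j$ and $|u_j|^{p-2}|\nabla u_j|^2=\tfrac{4}{p^2}|\nabla(|u_j|^{p/2})|^2$ turns the Laplacian contribution into $-\tfrac{4(p-1)}{p}\int|\nabla(|u_j|^{p/2})|^2\,dx$, while the force contribution becomes $-p(p-1)\int|u_j|^{p-2}\nabla u_j\cdot f_j\,dx$, which Young's inequality bounds by $\tfrac{2(p-1)}{p}\int|\nabla(|u_j|^{p/2})|^2\,dx+C\int|u_j|^{p-2}|f_j|^2\,dx$; the gradient piece here is exactly half of the good dissipation term and is absorbed. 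Summing over $j$ and using $|u_j|\le|u|$, hence $|u_j|^{p-2}\le|u|^{p-2}$ since $p\ge2$, consolidates $\sum_j|u_j|^{p-2}|f_j|^2\le|u|^{p-2}|f|^2$ and $\sum_j|u_j|^{p-2}\|g_j\|_{l^2}^2\le|u|^{p-2}\|g\|_{l^2}^2$ and reproduces the right-hand side of~\eqref{EQ20}, the left-hand dissipation being $\sum_j\int|\nabla(|u_j|^{p/2})|^2$; H\"older's inequality (with exponents $p/(p-2)$ and $p/2$ for the $g$-term) together with the standing hypotheses guarantees finiteness of both.

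For the martingale, writing $p|u_j|^{p-2}u_j\,g_{j,k}=p\,|u_j|^{p/2}\cdot|u_j|^{p/2-2}u_j\,g_{j,k}$ and applying Cauchy--Schwarz in $x$ gives $\bigl(\int p|u_j|^{p-2}u_j\,g_{j,k}\,dx\bigr)^2\le p^2\|u_j\|_{L^p}^p\int|u_j|^{p-2}g_{j,k}^2\,dx$, so summing over $k$ bounds the quadratic variation by $p^2\int_0^T\|u_j(s)\|_{L^p}^p\bigl(\int|u_j(s)|^{p-2}\|g_j(s)\|_{l^2}^2\,dx\bigr)\,ds$. The Burkholder--Davis--Gundy inequality followed by Young's inequality then yields
\[
\EE\Bigl[\sup_{t\le T}|M_j(t)|\Bigr]
\le\tfrac12\EE\Bigl[\sup_{t\le T}\|u_j(t)\|_{L^p}^p\Bigr]
+C\,\EE\Bigl[\int_0^T\int_{\TT^3}|u_j(s,x)|^{p-2}\|g_j(s,x)\|_{l^2}^2\,dx\,ds\Bigr].
\]
Taking $\sup_{t\le T}$ in the It\^o identity (the dissipation integral being nondecreasing in $t$), taking expectations, summing over $j$, and absorbing the half-dissipation term from the drift step and the half-supremum term from the BDG bound into the left side yields~\eqref{EQ20} with a constant $C=C(p)$.

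The step I expect to be the main obstacle is justifying It\^o's formula in the first paragraph: the functional $v\mapsto\|v\|_{L^p}^p$ is not twice Fr\'echet differentiable on $L^p$ when $p$ is not an even integer, and the integrations by parts above require more regularity than $u\in C([0,T],L^p)$ alone provides. The standard remedy is to regularize in space --- replace $u$ by $u\ast\rho_\delta$, which solves the equation with mollified data and forcing and for which the classical (finite-dimensional) It\^o formula applies since $|r|^p\in C^2$ --- establish the inequality for $\|u\ast\rho_\delta\|_{L^p}^p$, and pass to the limit $\delta\to0$, using the assumed integrability of $u$, $f$, and $g$ to make the mollification commutators in the drift and in the stochastic term vanish; alternatively, one invokes a ready-made It\^o formula for $L^p$-norms of SPDE solutions. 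The remainder is routine bookkeeping with integration by parts, H\"older's inequality, and Young's inequality.
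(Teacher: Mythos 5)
The paper does not prove this lemma itself --- it is quoted verbatim from \cite[Lemma~2.2]{KX2} --- so there is no in-paper proof to compare against; your argument is the standard one used in that reference and its predecessors (componentwise It\^o formula for $\Vert u_j\Vert_{L^p}^p$, integration by parts to produce the dissipation $\tfrac{4(p-1)}{p}\int|\nabla(|u_j|^{p/2})|^2$, Young's inequality to absorb the $f$-term, and Burkholder--Davis--Gundy with the Cauchy--Schwarz bound on the quadratic variation to absorb half of $\EE[\sup_t\Vert u_j\Vert_{L^p}^p]$). The computations and constants check out, and you correctly flag the only genuinely delicate point --- the rigorous justification of the It\^o formula for $\Vert\cdot\Vert_{L^p}^p$ when $p$ is not an even integer --- together with the standard mollification remedy, so the proposal is correct and essentially the same approach as the cited source.
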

\colb
\begin{rema}
We note that the same conclusion holds if instead of assuming $|f|^2 |u|^{p-2} \in L^1(\Omega\times[0,T], \mathbb{L}^1(\mathbb{T}^3))$ we consider either $f\in L^p(\Omega\times[0,T], L^q(\mathbb{T}^3))$ or $f\in L^{\frac{3p}{p+1}}(\Omega\times[0,T], L^p(\mathbb{T}^3))$. 	
\end{rema}

\begin{proof}[Proof of Lemma~\ref{L11}]
   We rewrite \eqref{EQ405}$_1$ as
   \begin{align}
   	   (\partial_t - \Delta)v^{(k)}
   	    = \nabla \cdot\left(f_{1,k}+f_{2,k}+f_{3,k}+g_{1,k}+g_{2,k}\right)+h_{k}\dot{W}
   	    ,\llabel{EQ408}
   \end{align}
where $f_{i,k}$, $i=1,2,3$, and  $g_{i,k}$, for $i=1,2$, are the terms on the first and second
 lines of the right-hand side of \eqref{EQ405}$_1$, respectively, and 
$h_k$ is the noise coefficient from the last two terms,
i.e.,
$h_k=		\psi_k^2 \phi_k^2 \zeta_{k-1} \psi_{\bar{w}}
		\sigma(t, v^{(k)} + w^{(k-1)} + \bar{w})
		-
		\psi_k^2 \phi_k^2 \zeta_{k-1} \psi_{\bar{w}}
		\sigma(t, w^{(k-1)} + \bar{w})
$.
Employing Lemma~\ref{L02}, we obtain
   \begin{align}
   	\begin{split}
   		&\EE\biggl[\sup_{0\leq t\leq T}\Vert v^{(k)}\Vert_{L^p}^p-\Vert v^{(k)}_0\Vert_{L^p}^p+\int_0^{T} \sum_{j=1}^3 
   		\int_{\mathbb{T}^3} | \nabla (|v_j^{(k)}|^{p/2})|^2 \,dx dt\biggr]
   		\\&\indeq
   		\leq C
   		\EE\biggl[
   			\int_0^{T} \int_{\mathbb{T}^3}|f_{1,k}+f_{2,k}+f_{3,k}|^2 |v^{(k)}(t,x)|^{p-2}\,dxdt\biggr]
   		\\&\indeq\indeq
   		+C\EE \biggl[
   			\int_0^{T}\int_{\mathbb{T}^3} |g_{1,k}+g_{2,k}|^2|v^{(k)}(t)|^{p-2} \,dxdt
   		\biggr]
   		+C\EE \biggl[
   		\int_0^{T}\int_{\mathbb{T}^3} |v^{(k)}(t)|^{p-2} \Vert h\Vert_{l^2}^2\,dxdt
   		\biggr]
   		\\&\indeq
   		=I_1+I_2+I_3.
   		\llabel{EQ409}
   	\end{split}
   	\end{align}
   Following the proof of~\cite[Lemma~3.3]{KX2}, we derive
   \begin{align}
   	  I_1 \le
   	   C(\epsilon+\epsilon_1)
   	    \EE \biggl[ \int_0^T \Vert v^{(k)}\Vert_{L^{3p}}^p \,dt\biggr]
   	    .\llabel{EQ410} 
   \end{align}
   For $I_2$, we utilize Hölder's and Young's inequalities, obtaining
\begin{align}
   	\begin{split}
   	   I_2
   	    &\le 
   	     \EE \biggl[ \int_0^T \int_{\TT^3} 
   	      \psi_k^4 \phi_k^4 \psi^2_{\bar{w}} |\bar{w}|^2 |v^{(k)}|^p \,dxdt\biggr]
   	      \le 
   	       \EE \biggl[ \int_0^T 
   	        \psi_k^4 \phi_k^4 \psi^2_{\bar{w}} 
   	         \Vert \bar{w}\Vert_{L^{6}}^2 \Vert v^{(k)}\Vert_{L^{\frac{3p}{2}}}^p \,dt\biggr]
   	         \\& \le 
   	          \EE \biggl[ \int_0^T 
   	          \psi_k^4 \phi_k^4 \psi^2_{\bar{w}} 
   	          \Vert \bar{w}\Vert_{L^{6}}^2 
   	           \Vert v^{(k)}\Vert_{L^{p}}^\frac{p}{2} \Vert v^{(k)}\Vert_{L^{3p}}^\frac{p}{2} \,dt\biggr]
   	            \\&\le
   	             \eta \EE \biggl[ \int_0^T 
   	              \Vert v^{(k)}\Vert_{L^{3p}}^p \,dt\biggr] 
   	             +
   	             C_\eta \EE \biggl[ \int_0^T 
   	             \psi_k^8 \phi_k^8 \psi^4_{\bar{w}} 
   	             \Vert \bar{w}\Vert_{L^{6}}^4 
   	             \Vert v^{(k)}\Vert_{L^{p}}^p \,dt\biggr]
   	        \llabel{EQ411}
   	        \end{split}
   \end{align}
  for an arbitrary $\eta>0$. To ensure the finiteness of the right-hand side, we invoke Lemma~\ref{L10} and a Poincar\'e-type inequality
   \begin{equation}\label{EQ14}
   	\Vert f\Vert_{L^{3p}}^p\leq C_p \Vert \nabla(|f|^{p/2})\Vert_{L^2}^2
   	\comma p\in[2,\infty)
   	,
   \end{equation}
   which holds for average-free functions $f$~(\cite{KZ}). Finally, we treat $I_3$ using \eqref{EQ04},
   \begin{align}
   	 	I_3 \le 
   	 	 C\EE \biggl[ \int_0^T \Vert v^{(k)}\Vert_{L^{p}}^p\,dt\biggr]
   	 .\llabel{EQ412}
   	 \end{align}
Thus, we arrive at 
   \begin{align}
   	\begin{split}
   		&\EE\biggl[\sup_{0\leq t\leq T}\Vert v^{(k)}\Vert_{L^p}^p-\Vert v^{(k)}_0\Vert_{L^p}^p+\int_0^{T} \sum_{j=1}^3 
   		\int_{\mathbb{T}^3} | \nabla (|v_j^{(k)}|^{p/2})|^2 \,dx dt\biggr]
   		\\&\indeq
   		\le C (\epsilon+\epsilon_1+\eta)
   		\EE\biggl[
   		\int_0^{T}  \Vert v^{(k)}\Vert_{L^{3p}}^p  \,dt\biggr]
   		+C_\eta (K_1^4 +1)\EE \biggl[
   		\int_0^{T} \Vert v^{(k)}\Vert_{L^{p}}^p\,dt
   		\biggr]
   		,\label{EQ413}
   	\end{split}
   \end{align}
   from where, choosing $\epsilon$, $\epsilon_1$, and $\eta$
sufficiently small, and applying the Poincaré-type inequality along with Gr\"onwall's inequality, we conclude the proof of~\eqref{EQ407}.  
\end{proof}

Next, we establish pointwise bounds on the solutions~$v^{(k)}$.

\cole
\begin{Lemma}[A pointwise $L^{3}$ control]
	\label{L09}
	Let $k\in\mathbb{N}_0$, $T>0$, and $\epsilon_1\in (2\epsilon_0, 1)$. Suppose that $\{\vk_0\}_{k\in\NNp}$ is a sequence of initial data of~\eqref{EQ405} satisfying~\eqref{EQ400}--\eqref{EQ402}. Then the solution to \eqref{EQ405} satisfies
	\begin{equation}
		\sup_{t\in [0,T]}\Vert \vk(t)\Vert_{L^{3}}
		\leq
		\frac{\epsilon_1}{2^{k-1}}
		\label{EQ71}
	\end{equation}
$\PP$-almost surely.
\end{Lemma}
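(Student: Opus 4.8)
The plan is to exploit how the cutoff $\phi_k$ is tuned exactly to the target level in \eqref{EQ71}. Since $\theta\equiv 0$ on $[2,\infty)$, we have $\phi_k(t)=\theta\bigl(2^k\Vert v^{(k)}(t)\Vert_{L^3}/\epsilon_1\bigr)=0$ whenever $\Vert v^{(k)}(t)\Vert_{L^3}\ge \epsilon_1/2^{k-1}$, and \emph{every} term on the right-hand side of \eqref{EQ405}$_1$ other than the diffusion $\Delta v^{(k)}$ carries the prefactor $\phi_k^2$. Hence, at and above the level $\epsilon_1/2^{k-1}$, the system \eqref{EQ405} degenerates to the deterministic, component-wise heat equation $\partial_t v^{(k)}-\Delta v^{(k)}=0$ on $\TT^3$, whose solution operator contracts the $L^3$ norm: the periodic heat kernel is a nonnegative probability density, so $|e^{\tau\Delta}f|\le e^{\tau\Delta}|f|$ pointwise and Young's inequality gives $\Vert e^{\tau\Delta}f\Vert_{L^3}\le\Vert f\Vert_{L^3}$, also for vector-valued $f$. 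I would turn this into a first-exit argument showing that $\Vert v^{(k)}(t)\Vert_{L^3}$ can never reach $\epsilon_1/2^{k-1}$.

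By Lemma~\ref{L10}, $v^{(k)}\in L^6(\Omega;C([0,T],L^6))$, so almost surely $t\mapsto v^{(k)}(t)$ is continuous into $L^6\hookrightarrow L^3$ and thus $t\mapsto\Vert v^{(k)}(t)\Vert_{L^3}$ is continuous; moreover, \eqref{EQ400}--\eqref{EQ402} together with $\epsilon_1>2\epsilon_0$ give $\Vert v^{(k)}_0\Vert_{L^3}<\epsilon_1/2^{k-1}$ for every $k$. Fix an $\omega$ in the full-measure set where these properties and the solution formula for \eqref{EQ405} hold, and suppose $\Vert v^{(k)}(t^*)\Vert_{L^3}>\epsilon_1/2^{k-1}$ for some $t^*\in(0,T]$. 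Put $s^*:=\sup\{t\in[0,t^*]:\Vert v^{(k)}(t)\Vert_{L^3}\le\epsilon_1/2^{k-1}\}$. Because the initial value lies strictly below the level, continuity forces $0<s^*<t^*$, $\Vert v^{(k)}(s^*)\Vert_{L^3}=\epsilon_1/2^{k-1}$, and $\Vert v^{(k)}(t)\Vert_{L^3}>\epsilon_1/2^{k-1}$ for all $t\in(s^*,t^*]$; consequently $\phi_k\equiv 0$ on $[s^*,t^*]$ (at $s^*$ the argument of $\theta$ equals $2$). On that interval every nonlinear term and the noise term of \eqref{EQ405} vanishes identically, so the mild formulation reads $v^{(k)}(t)=e^{(t-s^*)\Delta}v^{(k)}(s^*)$ for $t\in[s^*,t^*]$, and the contractivity above yields $\Vert v^{(k)}(t^*)\Vert_{L^3}\le\Vert v^{(k)}(s^*)\Vert_{L^3}=\epsilon_1/2^{k-1}$, a contradiction. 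Hence $\sup_{t\in[0,T]}\Vert v^{(k)}(t)\Vert_{L^3}\le\epsilon_1/2^{k-1}$ almost surely, which is \eqref{EQ71}.

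The argument uses nothing beyond the cutoff design and the path regularity from Lemma~\ref{L10}; in particular neither the smallness of $\epsilon_1$ nor any property of the lower iterates $v^{(0)},\dots,v^{(k-1)}$ enters here. The one delicate point --- the one I expect to spend most care on --- is the claim that on the \emph{random} window $[s^*,t^*]$ the It\^o integral $\int_{s^*}^{t}h_k\,dW$ ($t\le t^*$) vanishes; this is true because $h_k$ carries the factor $\phi_k^2$ and hence is identically zero on that window, so the stochastic integral has zero quadratic variation there, but to phrase it without circularity I would localize the excursion by auxiliary stopping times (the first hitting time of the level $\epsilon_1/2^{k-1}$ and the subsequent first return strictly below it) rather than manipulating $s^*,t^*$ directly. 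No new estimate is needed beyond the $L^3$-contractivity of the heat semigroup. Finally, summing \eqref{EQ71} over $k$ gives $\sum_{l=0}^{k-1}\Vert v^{(l)}\Vert_{L^3}\le 4\epsilon_1$ pointwise, which is exactly the hypothesis $\Vert w^{(k-1)}\Vert_{L^3}\le\epsilon$ required in Lemma~\ref{L11}.
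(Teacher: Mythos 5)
Your proposal is correct and follows essentially the same route as the paper, which (citing the proof of Lemma~3.5 in [KX2]) argues that once $\Vert v^{(k)}\Vert_{L^3}$ reaches $\epsilon_1/2^{k-1}$ the cutoff $\phi_k$ annihilates the entire right-hand side of \eqref{EQ405}$_1$, leaving a heat equation whose $L^3$ norm cannot increase, a contradiction. Your excursion construction via the last-entrance time $s^*$, the $L^3$-contractivity of the periodic heat semigroup, and the flagged need to justify the vanishing of the stochastic integral on the random window (via quadratic variation or auxiliary stopping times) are exactly the details the paper's sketch leaves implicit.
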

\colb

Lemma~\ref{L09} follows from the proof of~\cite[Lemma 3.5]{KX2}. The main idea is that if the norm of the solution
exceeds the bound \eqref{EQ71}, it remains above the bound with positive probability
due to the continuity of its trajectories. However, as soon as the energy grows beyond
$\epsilon_1/2^{k-1}$, the cutoff functions $\phi_k$ annihilate the right-hand side
of~\eqref{EQ405}$_1$. Thus, we obtain a random heat equation for which solution's energy
dissipates almost surely, leading to a contradiction. Given \eqref{EQ403}, we note that Lemma~\ref{L09} asserts $\Vert w^{(k)}\Vert_3 \leq 4\epsilon_1$ for all $k\in\NNz$, thereby fulfilling one of the assumptions of Lemma~\ref{L11}.

Now, we define the stopping times $\tau_k$ as the first time $\Vert \vk \Vert_{L^{3}}$
reaches $\fractext{\epsilon_1}{2^{k}}$, and $\rho_k$ as the first time
$\Vert \vk \Vert_{L^{6}}$ reaches~$M_k$.
Also, we write
\begin{equation}
	\tau^{k}
	= \tau_{0} \wedge \rho_{0} \wedge \cdots \wedge \tau_{k} \wedge \rho_{k}
	\comma k\in\mathbb{N}_0
	\label{EQ85}
\end{equation}
and denote by 
\begin{equation}
	\tau_w=\inf_k \tau^k=\lim_k \tau^k
	\label{EQ70}
\end{equation}
the limiting stopping time.
It follows that 
\begin{equation}
	\sup_{t\in [0,\tau_w)}\Vert \vk(t)\Vert_{L^{3}}
	\leq
	\frac{\epsilon_1}{2^{k}}
	\comma k\in\NNz,
	\label{EQ80}
\end{equation}
and
\begin{equation}
	\sup_{t\in [0,\tau_w)}\Vert \vk(t)\Vert_{L^{6}}
	\leq
	M_k
	\comma k\in\mathbb{N}_0
	,
	\llabel{EQ81}
\end{equation}
for $k\in\mathbb{N}_0$. 
Below, we establish the almost sure positivity of~$\tau_w$.

\cole
\begin{Lemma}
	\label{L08}
Let $k\in\mathbb{N}_0$, and
assume that $\vk_0$ is an initial datum of~\eqref{EQ405} satisfying \eqref{EQ400}--\eqref{EQ402}.
	With $\tau_w$ defined as in \eqref{EQ70}, we have
	\begin{equation}
		\PP(\tau_w>0)
		= 1
		,
		\label{EQ79}
	\end{equation}
	provided that $0<\epsilon_1\ll 1$, $\epsilon_0$ is sufficiently small relative to $\epsilon_1$, and $M_k$ is sufficiently large relative to $\Vert v_0^{(k)}\Vert_{L^{\infty}_\omega L_x^6}$ (see~\eqref{cutoff1} and~\eqref{size}). 
\end{Lemma}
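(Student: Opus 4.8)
The plan is to show that $\tau_w>0$ almost surely by proving that each component stopping time $\tau_k$ and $\rho_k$ is positive a.s., and that the infimum $\tau_w=\inf_k\tau^k$ does not collapse to zero. I would begin by invoking Lemma~\ref{L10} and Lemma~\ref{L11}: each $v^{(k)}\in L^6(\Omega;C([0,T],L^6))$ has continuous trajectories in $L^6$ and hence also in $L^3$, so for fixed $k$ the set $\{\tau_k>0\}$ and $\{\rho_k>0\}$ each have probability one, provided the initial data satisfy the strict inequalities $\sup_\Omega\Vert v_0^{(k)}\Vert_{L^3}<\epsilon_1/2^k$ and $\sup_\Omega\Vert v_0^{(k)}\Vert_{L^6}<M_k$. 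The first follows from~\eqref{EQ402} once $\epsilon_1>2\epsilon_0$ (indeed $\epsilon_0/4^k<\epsilon_1/2^k$), and the second is exactly the hypothesis that $M_k$ is chosen large relative to $\Vert v_0^{(k)}\Vert_{L^\infty_\omega L^6_x}$. Therefore each $\tau^k>0$ a.s.

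The main point is to prevent $\tau_w=\lim_k\tau^k$ from being zero. Here I would use the uniform-in-$k$ energy bound~\eqref{EQ407} from Lemma~\ref{L11}: applied with $p=6$, it gives
\begin{align}
\EE\Bigl[\sup_{t\in[0,T]}\Vert v^{(k)}(t)\Vert_{L^6}^6\Bigr]\le C_T\,\EE[\Vert v_0^{(k)}\Vert_{L^6}^6],
\nonumber
\end{align}
with $C_T$ independent of $k$. One has to be slightly careful, because $v^{(k)}$ solves the truncated system~\eqref{EQ405} with cutoffs $\psi_j$ ($j\le k-1$) built in, and Lemma~\ref{L11} requires $\Vert w^{(k-1)}\Vert_3\le\epsilon$ on $\Omega\times[0,T]$; this holds because Lemma~\ref{L09} already gives the pathwise bound $\Vert v^{(j)}\Vert_3\le\epsilon_1/2^{j-1}$ for all $j$, so $\Vert w^{(k-1)}\Vert_3\le\sum_{j\ge0}\epsilon_1/2^{j-1}=4\epsilon_1$, which is $\le\epsilon$ once $\epsilon_1$ is small. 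Now choose the thresholds $M_k$ growing fast enough — e.g. $M_k=2^k\,(1+\sup_\Omega\Vert v_0^{(k)}\Vert_{L^6})$ or similar — so that, by Chebyshev applied to the above bound together with~\eqref{EQ402}-type control on $\Vert v_0^{(k)}\Vert_{L^6}$ (which is finite since each $v_0^{(k)}\in\LLL{\infty}{\infty}$), the probability
\begin{align}
\PP\Bigl(\sup_{t\in[0,T]}\Vert v^{(k)}(t)\Vert_{L^6}\ge M_k\Bigr)
=\PP(\rho_k<T)\le\frac{C_T\,\EE[\Vert v_0^{(k)}\Vert_{L^6}^6]}{M_k^6}
\nonumber
\end{align}
is summable in $k$. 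A parallel estimate with $p=3$, using~\eqref{EQ402}, controls $\PP(\tau_k<T)$ and makes it summable as well. By Borel–Cantelli, almost surely only finitely many of the events $\{\tau_k<T\}\cup\{\rho_k<T\}$ occur; on the complementary full-measure set, $\tau^k=T$ for all large $k$, hence $\tau_w=\inf_k\tau^k=\min$ over finitely many strictly positive stopping times, which is strictly positive. This gives~\eqref{EQ79}.

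The main obstacle I anticipate is the bookkeeping in the uniform energy estimate: one must verify that the constant $C_T$ in~\eqref{EQ407} genuinely does not degrade as $k\to\infty$ despite the presence of $w^{(k-1)}$ and $\bar w$ in the $k$-th equation — this is exactly where the cutoffs $\zeta_{k-1}=\prod_{i<k}\psi_i$ and $\psi_{\bar w}$ do their work, confining the drift and noise perturbations to a regime where the $L^6$-norms of $w^{(k-1)}$ and $\bar w$ are bounded by $2\sum_j M_j$ and $2K_1$ respectively, so all coupling constants are absorbed into a $k$-independent $C_T$. The other delicate point is the calibration of the three smallness parameters: $\epsilon_1$ small (to make $4\epsilon_1\le\epsilon$ and feed Lemma~\ref{L11}), then $\epsilon_0$ small relative to $\epsilon_1$ (so the initial data genuinely start strictly below every threshold $\epsilon_1/2^k$), and finally $M_k$ large relative to $\Vert v_0^{(k)}\Vert_{L^\infty_\omega L^6_x}$ and growing fast enough in $k$ for Borel–Cantelli; since these choices are made in this order without circularity, the argument closes.
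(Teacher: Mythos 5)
Your proposal is correct, and it rests on exactly the same pillars as the paper's proof: the union bound over $j\le k$, Markov/Chebyshev applied to the uniform-in-$k$ energy estimate of Lemma~\ref{L11} (whose hypothesis you rightly discharge via Lemma~\ref{L09}), the geometric gap between the initial sizes $\epsilon_0/4^k$ and the thresholds $\epsilon_1/2^k$, and the choice of $M_k$ exceeding $\Vert v_0^{(k)}\Vert_{L^\infty_\omega L^6_x}$ by a factor growing like $2^k$. Where you diverge is in the final assembly. The paper applies Markov's inequality to the \emph{increments} $\sup_{t<\delta}\Vert v^{(j)}\Vert_{L^p}^p-\Vert v_0^{(j)}\Vert_{L^p}^p$, and the energy inequality bounds these by $C\,\EE[\int_0^\delta\Vert v^{(j)}\Vert_{L^p}^p\,dt]$, which carries an explicit factor of $\delta$; summing over $j$ then gives $\PP(\tau^k<\delta)\le C\delta$ uniformly in $k$, and letting $\delta\to0$ yields \eqref{EQ79} in one stroke. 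You instead work at a fixed horizon $T$, get summability in $k$ of $\PP(\tau_k<T)+\PP(\rho_k<T)$ without any smallness in time, and invoke Borel--Cantelli, which forces you to supply the additional (easy) observation that each individual $\tau_j$ and $\rho_j$ is a.s.\ positive by path continuity and the strict initial inequalities. Both routes close; the paper's is marginally more economical because the $O(\delta)$ bound makes the individual positivity step unnecessary, while yours makes the mechanism (only finitely many thresholds are ever crossed on $[0,T]$, almost surely) more transparent. Your anticipated obstacles --- the $k$-independence of $C_T$ via the cutoffs $\zeta_{k-1}$ and $\psi_{\bar w}$, and the ordering $\epsilon_1$ first, then $\epsilon_0$, then $M_k$ --- are precisely the points the paper attends to, so there is no gap.
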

\colb

\begin{proof}[Proof of Lemma~\ref{L08}]
  Recall~\eqref{EQ71} and~\eqref{EQ403}. If $\epsilon_1$ is sufficiently small, then by~\eqref{EQ413} in the proof of Lemma~\ref{L11}, we can conclude that
   \begin{align}
   	\begin{split}
   		&\EE\biggl[\sup_{0\leq t\leq \delta}\Vert v^{(k)}\Vert_{L^p}^p-\Vert v^{(k)}_0\Vert_{L^p}^p 
   		\biggr] \le 
   		C\EE \biggl[
   		\int_0^{\delta} \Vert v^{(k)}\Vert_{L^{p}}^p\,dt
   		\biggr]
   		\comma p=3, 6, \label{EQ414}
   	\end{split}
   \end{align}
where $\delta \in (0, T)$ for some fixed $T>0$ and $C$ is independent of~$k$. By the definition of $\tau^k$,
   \begin{align}
   	\begin{split}
   		\PP(\tau^k<\delta)
   		&\leq
   		\sum_{j=0}^{k}
   		\mathbb{P}
   		\left(
   		\sup_{0\leq t< \delta}\Vert\vj\Vert_{L^6}^6\geq M^6_j
   		\right)
   		+
   		\sum_{j=0}^{k}
   		\mathbb{P}
   		\left(
   		\sup_{0\leq t< \delta}\Vert\vj\Vert_{L^3}^3\geq \frac{\epsilon^3_1}{2^{3j}}
   		\right)
   		\\&\leq
   		\sum_{j=0}^{k}
   		\mathbb{P}
   		\left(
   		\sup_{0\leq t< \delta}\Vert\vj\Vert_{L^6}^6 - \Vert v_0^{(j)}\Vert_{L^6}^6\geq M^6_j - \Vert v_0^{(j)}\Vert_{L^6}^6 
   		\right)
   		\\&\indeq
   		+
   		\sum_{j=0}^{k}
   		\mathbb{P}
   		\left(
   		\sup_{0\leq t< \delta}\Vert\vj\Vert_{L^3}^3 -\Vert v_0^{(j)}\Vert_{L^3}^3\geq \frac{\epsilon^3_1}{2^{3j}}-\frac{2^3\epsilon^3_0}{4^{3j}}
   		\right)
   		.
   	\end{split}
   	\llabel{EQ415}
   \end{align}
If, in addition, $\epsilon_0$ is sufficiently small compared to $\epsilon_1$, and $M_j$ is larger than $\Vert v_0^{(j)}\Vert_{L^\infty_\omega L_x^6}$ by an exponential factor of $ 2^j$, 
we may employ Markov's inequality and \eqref{EQ414}, along with \eqref{EQ400}--\eqref{EQ402}, and write
   \begin{align}
   	\begin{split}
   \PP&(\tau^k<\delta)  
   		\\&\leq
   		\sum_{j=0}^{k}
   		\frac{1}{M^6_j - \Vert v_0^{(j)}\Vert_{L^6}^6}\EE \biggl[
   		\sup_{0\leq t< \delta}\Vert\vj\Vert_{L^6}^6 - \Vert v_0^{(j)}\Vert_{L^6}^6 \biggr]
   		+
   		\sum_{j=0}^{k}
   		\frac{4^{3j}}{2^{3j}\epsilon^3_1-2^3\epsilon^3_0}
   		\EE \biggl[
   		\sup_{0\leq t< \delta}\Vert\vj\Vert_{L^3}^3 -\Vert v_0^{(j)}\Vert_{L^3}^3
   		\biggr]
   		\\&\leq
   		C\delta
   		\sum_{j=0}^{k}
   		\frac{\Vert v_0^{(j)}\Vert_{L^{\infty}_\omega L_x^6}^6}{M^6_j - \Vert v_0^{(j)}\Vert_{L^{\infty}_\omega L_x^6}^6} 
   		+
   		C \delta
   		\sum_{j=0}^{k}
   		\frac{4^{3j}}{2^{3j}\epsilon^3_1-2^3\epsilon^3_0}
   		\frac{2\epsilon^3_0}{4^{3j}}
   		\le C\delta
   	.
   	\end{split}
   	\llabel{EQ416}
   \end{align}
Since $\delta>0$ is arbitrary, we obtain~\eqref{EQ79}.
\end{proof}

To prove Proposition~\ref{P01}, it remains to show that $w$, as defined in \eqref{EQ403-1}, indeed solves \eqref{EQ25} up to~$\tau_w$. The arguments are nearly identical to those in~\cite[Theorem 2.1]{KX2} and \cite[Remark 3.9]{KX2}. The terms in~\eqref{EQ404} involving $\bar{w}$ are linear in $w$ and thus introduce no additional difficulty when passing to the limit and establishing uniqueness within the class of small $L^\infty_\omega L^3_x$ solutions (see~\eqref{EQ303} which follows from \eqref{EQ80} and the additive construction of the solution). Moreover, the smallness of $\epsilon_0$ depends only on $\epsilon_1$, which is determined by the coefficients in the energy inequalities and Poincaré-type inequalities. Hence, $\epsilon_0$ is independent of~$\bar{w}$.

Applying Lemma~\ref{T03} and Proposition~\ref{P01}, we now conclude the proof of Theorem~\ref{T01}.

\begin{proof}[Proof of Theorem~\ref{T01}]
	We choose $\epsilon_0 > 0$ sufficiently small and decompose $u_0$ as $u_0 = w_0 + \bar{w}_0$ so that it satisfies~\eqref{size}. We then apply Lemma~\ref{T03} and Proposition~\ref{P01}, obtaining unique pairs $(\bar{w}, \tau_{\bar{w}})$ and $(w, \tau_w)$ solving \eqref{EQ22} and \eqref{EQ25} respectively. The existence part of Theorem~\ref{T01} follows by setting $u = w + \bar{w}$ and $\tau = \tau_w \wedge \tau_{\bar{w}}$. Since both $\tau_w$ and $\tau_{\bar{w}}$ are positive almost surely, the same holds for~$\tau$. 
	Furthermore, the energy estimate~\eqref{EQ17} is a consequence of ~\eqref{EQ300} and~\eqref{EQ15-2}.
	The bounds \eqref{EQ305} and \eqref{EQ306} follow from \eqref{EQ303} and \eqref{EQ304} upon recalling that $\tau \le \min(\tau_w, \tau_{\bar{w}})$.
	
   Now, we establish the uniqueness up to a possibly shorter stopping time,
   i.e., we show that there exists $T>0$ such that $u$ is pathwise unique on $[0,\tau \wedge T)$. 
   Let
   $(u,\tau)$ and $(\tilde{u},\tilde{\tau})$ be two solutions
   satisfying the assertions in Theorem~\ref{T01} with the same initial datum. We write $u = w + \bar{w}$ and $\tilde{u} = \tilde{w} + \tilde{\bar{w}}$. Then, $(U,\kappa)=(u-\bar{u},\tau \wedge \tilde{\tau})$
solves
\begin{align} 
   	\begin{split}
   		&(\partial_t- \Delta ) U 
   		= - \mathcal{P}((U\cdot\nabla) u)
   		- \mathcal{P}((\tilde{u}\cdot\nabla) U)
   		+ (\sigma(t,u) -\sigma(t,\tilde{u} ))\dot{W}(t),
   		\\
   		&\nabla\cdot U = 0
   		\\
   		& U(0)=0.
   	\end{split}
   	\llabel{EQ0102}
   \end{align}
Applying Lemma~\ref{L02}, we obtain
   \begin{align}
   	\begin{split}
   		&\EE\biggl[\sup_{0\leq t\leq \kappa \wedge T}\Vert U(t,\cdot)\Vert_{L^3}^3
   		+\int_0^{\kappa \wedge T} 
   		\int_{\mathbb{T}^3} | \nabla (|U|^{\frac{3}{2}})|^2 \,dx dt\biggr]
   		\\&\indeq
   		\leq C
   		\EE\biggl[
   		{
   			\int_0^{\kappa \wedge T} \int_{\mathbb{T}^3}(|U \otimes u|^2+ |\tilde{u} \otimes U|^2) |U|\,dxdt
   		}
   		{
   			+ \int_0^{\kappa \wedge T}\int_{\mathbb{T}^3} |U| \Vert \sigma(t,u)-\sigma(t,\tilde{u})\Vert_{l^2}^2\,dxdt
   		}
   		\biggr]=I_1+I_2.
   	\end{split}
   	\llabel{EQ200}
   \end{align}
For $I_2$, we have the bound
\begin{align}
	I_2
	\le
	C\EE \biggl[
	\int_0^{\kappa \wedge T} \Vert U\Vert_{L^{3}} \Vert \sigma(t,u)-\sigma(t,\tilde{u})\Vert_{\mathbb{L}^{3}}^2\,dt\biggr]
	\le
	C\EE \biggl[ \int_0^{\kappa \wedge T} \Vert U\Vert_{L^{3}}^3\,dt\biggr]
	\le
	CT\EE\biggl[ \sup_{0 \le t \le \kappa \wedge T} \Vert U\Vert_{L^3}^3\biggr] 
	.\llabel{EQ202}
\end{align}
Next, we estimate $I_1$ as
   \begin{align}
   	I_1 \le 
   	C\EE\biggl[ \int_0^{\kappa \wedge T} \int_{\mathbb{T}^3}
   	|U|^3 (|w|^2+|\tilde{w}|^2)\,dxdt \biggr]
   	+C\EE\biggl[ \int_0^{\kappa \wedge T} \int_{\mathbb{T}^3}
   	|U|^3 (|\bar{w}|^2+|\tilde{\bar{w}}|^2)\,dxdt \biggr]
   	=I_{11}+I_{12}.\llabel{EQ307}
   \end{align}
   For the first term, we employ H\"{o}lder's inequality, obtaining
   \begin{align}
   	I_{11}
   	 \le 
      C\EE \biggl[ \int_0^{\kappa \wedge T}
       \Vert U\Vert_{L^{9}}^3 (\Vert w\Vert_{L^{3}}^2+\Vert \tilde{w}\Vert_{L^{3}}^2) \,dt \biggr]
       \le 2C\epsilon^2\EE \biggl[ \int_0^{\kappa \wedge T}
       \Vert U\Vert_{L^{9}}^3 \,dt \biggr],\llabel{EQ308}
   \end{align}
   where we used \eqref{EQ305} for $w$ and $\tilde{w}$ in the last inequality.
   For $I_{12}$, utilizing H\"{o}lder's inequality again, we arrive at
   \begin{align}
   	I_{12} \le 
   	C \EE \biggl[ \int_0^{\kappa \wedge T} \Vert U\Vert_{L^{\frac{9}{2}}}^3
   	  (\Vert \bar{w}\Vert_{L^{6}}^2+\Vert \tilde{\bar{w}}\Vert_{L^{6}}^2) \,dt\biggr]
   	   \le 
   	   C\EE \biggl[ \int_0^{\kappa \wedge T} 
   	    \Vert U\Vert_{L^{9}}^\frac{3}{2}\Vert U\Vert_{L^{3}}^\frac{3}{2}
   	     (\Vert \bar{w}\Vert_{L^{6}}^2+\Vert \tilde{\bar{w}}\Vert_{L^{6}}^2)  \,dt\biggr]
   	   .\llabel{EQ309}
   \end{align}
   Now, Young's inequality and \eqref{EQ14} imply
   \begin{align}
   	\begin{split}
   	I_{12}
   	 &\le 
   	  \eta \EE \biggl[ \int_0^{\kappa \wedge T} \Vert \nabla(|U|^\frac{3}{2}|)\Vert_{L^{2}}^2\,dt \biggr] 
   	  +C_\eta
   	   \EE \biggl[ \int_0^{\kappa \wedge T} 
   	    \Vert U\Vert_{L^{3}}^3
   	   (\Vert \bar{w}\Vert_{L^{6}}^4+\Vert \tilde{\bar{w}}\Vert_{L^{6}}^4)  \,dt\biggr]
   	   \\& \le 
   	   \eta \EE \biggl[ \int_0^{\kappa \wedge T} \Vert \nabla(|U|^\frac{3}{2}|)\Vert_{L^{2}}^2\,dt \biggr] 
   	   +C_\eta
   	   T\EE \biggl[ \sup_{0\le t\le \kappa \wedge T} \Vert U\Vert_{L^{3}}^3\biggr] 
   	   ,\llabel{EQ310}
   	   \end{split}
   \end{align}
where we used \eqref{EQ306} for $\bar{w}$ and $\tilde{\bar{w}}$, and $\eta, C_\eta >0$. 
To conclude, we choose $\epsilon$, $\eta$, and $T$ sufficiently small,
from where it follows that $\PP (u(t) = \tilde{u}(t), \forall t \in [0,\tau \wedge \tilde{\tau} \wedge T])=1$, i.e., the solution is pathwise unique for a short time. 
 \end{proof}
 
Note that this proof of the uniqueness part does not rely on the uniqueness of the decomposition.
 
 \colb
\startnewsection{General $H^\frac{1}{2}$ data}{sec06}

In this section, we present an analog of Theorem~\ref{T01}
for initial data in~$H^\frac12$. First, we replace the assumption \eqref{EQ04}
by\begin{align}
	 \Vert\sigma(t, u_1)-\sigma(t, u_2)\Vert_{\mathbb{H}^{\frac12 + \delta}} 
	\le K\Vert  u_1-u_2\Vert_{\mathbb{H}^{\frac12 + \delta}}
	\commaone u_1, u_2\in \mathbb{H}^{\frac12 + \delta}(\TT^3)
	\comma t\ge 0
	\commaone
	\delta=0,\frac12,\label{EQ100}
\end{align}
and then our main result reads as follows.

\cole
\begin{Theorem}[A local solution with $H^{\frac12}$ initial data]
\label{T04}
Assume that $\uu_0\in L^\infty(\Omega; H^{\frac12}(\TT^3))$  is such that
$\nabla\cdot u_0=0$ and $\int_{\TT^3} u_0=0$,
and suppose that the assumptions~\eqref{EQ05}
and~\eqref{EQ100} hold.
	Then there exists
	a unique solution $(u, \tau)$
	of 
	\eqref{EQ01} on $(\Omega, \mathcal{F},(\mathcal{F}_t)_{t\geq
		0},\mathbb{P})$, with the initial condition
$u_0$, such that $u=w+\bar{w}$, and
	\begin{align}
		\begin{split}
			\EE\biggl[
			\sup_{0\leq s\leq \tau}
			\Vert\uu(s,\cdot)\Vert_{H^{\frac12}}^2
			+\int_0^{\tau} \Vert u\Vert_{H^{\frac{3}{2}}}^2 \,ds
			\biggr]
			\leq 
			C\sup_{\Omega} \Vert u_0\Vert_{H^{\frac12}}^2
		\end{split}
		\llabel{EQ101}
	\end{align}
for some positive constant~$C$. In addition, 
	\begin{align}
		\sup_\Omega \left(\sup_{0\le t \le \tau} \Vert w\Vert_{H^{\frac12}}^2
		 +\int_0^\tau \Vert w\Vert_{H^{\frac{3}{2}}}^2 \,dt\right)
		 \le \epsilon
		 \llabel{EQ178}
	\end{align}
for a sufficiently small constant $\epsilon$, and
	\begin{align}
		\sup_\Omega \left(\sup_{0\le t \le \tau} \Vert \bar w\Vert_{H^{1}}^2
		+\int_0^\tau \Vert \bar w\Vert_{H^{2}}^2 \,dt\right)
		\leq \tilde C
		\label{EQ178}
	\end{align}
for some positive constant $\tilde C$, where $\mathbb{P}[\tau>0]=1$.
\end{Theorem}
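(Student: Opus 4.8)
The plan is to mirror the $L^3$ theory developed in Sections~\ref{sec02}--\ref{sec04}, with $L^3$ replaced by $H^{\frac12}$ and $L^6$ replaced by $H^1$. The pair of exponents $(\frac12,1)$ plays exactly the role of $(3,6)$: $H^{\frac12}$ is the critical space, $H^1$ is the subcritical space in which we solve for the large part of the data, and the scaling-compatible gain $H^{\frac12}\to H^{\frac32}$ (respectively $H^1\to H^2$) furnished by the heat semigroup replaces the Poincar\'e-type gain $L^p\to L^{3p}$ used in~\eqref{EQ14}. First I would decompose $u_0=w_0+\bar w_0$ with $\Vert w_0\Vert_{L^\infty_\omega H^{\frac12}_x}\le\epsilon_0$ and $\Vert\bar w_0\Vert_{L^\infty_\omega H^1_x}\le K_0$; the $H^{\frac12}$-analog of Lemma~\ref{L05} is elementary (split in frequency: keep a finite block of low frequencies in $\bar w_0$, push the high-frequency tail, which is small in $H^{\frac12}$, into $w_0$, and further dyadically decompose the tail into pieces $v_0^{(k)}$ with $\Vert v_0^{(k)}\Vert_{H^{\frac12}}\lesssim\epsilon_0 4^{-k}$ while $v_0^{(k)}\in H^1$), and the projector $\mathcal P$ together with the Fourier multiplier definition of $\mathbb H^{\frac12+\delta}$ keeps everything divergence-free and average-free.

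Next I would solve the Navier--Stokes system~\eqref{EQ22} for $\bar w$ with $H^1$ data by invoking~\cite[Proposition~4.2]{GZ}, which (as noted in the introduction) gives a unique local strong solution with $\bar w\in L^2(\Omega;C([0,\tau_{\bar w}],H^1)\cap L^2(0,\tau_{\bar w};H^2))$ and the energy bound~\eqref{EQ178}; after the cutoff/stopping-time redefinition in the style of~\eqref{EQ23} we may treat $\bar w$ as globally defined with the stated bounds. Then, for the remainder equation~\eqref{EQ25} (same equation, now with $H^{\frac12}$ data), I would run the identical scheme of Section~\ref{sec04}: write $w=\sum_k v^{(k)}$, let each $v^{(k)}$ solve the truncated system~\eqref{EQ405} with cutoffs $\psi_k=\theta(M_k^{-1}\Vert v^{(k)}\Vert_{H^1})$, $\phi_k=\theta(2^k\epsilon_1^{-1}\Vert v^{(k)}\Vert_{H^{\frac12}})$, $\psi_{\bar w}=\theta(K_1^{-1}\Vert\bar w\Vert_{H^1})$, obtain existence of each $v^{(k)}$ by a fixed-point argument in $L^2(\Omega;C([0,T],H^1))$ (the $H^1$-analog of Lemma~\ref{L10}), prove the uniform-in-$k$ energy estimate (the $H^{\frac12}$/$H^1$-analog of Lemma~\ref{L11}) via an It\^o formula in $H^{s}$ — here the Hilbert structure actually makes life easier than in $L^3$, since the genuine It\^o isometry is available and one does not need Lemma~\ref{L02} — then the pathwise bound $\sup_t\Vert v^{(k)}\Vert_{H^{\frac12}}\le\epsilon_1 2^{-(k-1)}$ (analog of Lemma~\ref{L09}), and finally $\PP(\tau_w>0)=1$ (analog of Lemma~\ref{L08}) by the same Markov-inequality summation over $k$. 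Summing the equations and passing to the limit gives that $w=\sum_k v^{(k)}$ solves~\eqref{EQ25}; uniqueness in the small-$H^{\frac12}$ class follows as in the proof of Theorem~\ref{T01} from an energy estimate for the difference $U=u-\tilde u$, where the troublesome quadratic term $\int |U|^2|\nabla U|$-type contributions are controlled by $\Vert u\cdot\nabla U\Vert$ bounds using $H^{\frac12}\cdot H^{\frac32}\hookrightarrow L^2$ product estimates, splitting $u=w+\bar w$ exactly as in $I_{11},I_{12}$, absorbing the gradient term into the $\int_0^\tau\Vert U\Vert_{H^{\frac32}}^2$ on the left and choosing $\epsilon$, $\eta$, $T$ small. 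Setting $u=w+\bar w$ and $\tau=\tau_w\wedge\tau_{\bar w}$, and assembling the three energy bounds, yields the theorem.

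The main obstacle, and the place where genuine new work is needed rather than transcription, is the nonlinear/noise estimates in the energy inequality for~\eqref{EQ405} in the $H^{s}$ setting, i.e.\ controlling terms like $\langle\mathcal P(v^{(k)}\cdot\nabla\bar w), v^{(k)}\rangle_{H^{\frac12}}$ and $\langle\mathcal P(\bar w\cdot\nabla v^{(k)}), v^{(k)}\rangle_{H^{\frac12}}$ and the Hilbert--Schmidt norm of the fractional noise coefficient. One must pin down product/commutator estimates in fractional Sobolev spaces on $\TT^3$ — of the form $\Vert fg\Vert_{H^{\frac12}}\lesssim\Vert f\Vert_{H^1}\Vert g\Vert_{H^{\frac12}}$ and $\Vert [ \langle D\rangle^{\frac12}, \bar w\cdot\nabla] v\Vert_{L^2}\lesssim \Vert\bar w\Vert_{H^1}\Vert v\Vert_{H^{\frac32}}^{1/2}\Vert v\Vert_{H^{\frac12}}^{1/2}$ after interpolation — so that the cutoff $\psi_{\bar w}$ (controlling $\Vert\bar w\Vert_{H^1}$) and $\phi_k$ (controlling the small $H^{\frac12}$-norm) together make the bad constant small, matching the structure of~\eqref{EQ413}; the $\int_0^T\Vert v^{(k)}\Vert_{H^{\frac32}}^2$ terms then play the role of $\int_0^T\Vert v^{(k)}\Vert_{L^{3p}}^p$ and are absorbed by the parabolic gain. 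The assumption~\eqref{EQ100} with $\delta=0,\frac12$ is exactly what is needed to control $\Vert\sigma(t,v^{(k)}+w^{(k-1)}+\bar w)-\sigma(t,w^{(k-1)}+\bar w)\Vert_{\mathbb H^{\frac12}}\le K\Vert v^{(k)}\Vert_{H^{\frac12}}$ in the noise term (and the $\delta=\frac12$ case for the $H^1$-level solution $\bar w$). Since the excerpt asks only for a sketch — \cite[Section~6]{} promises exactly that — I would present the above reduction and the product-estimate lemma, carry out the $I_1$--$I_3$ decomposition schematically to show each term is of the form "small constant times $\int\Vert v^{(k)}\Vert_{H^{\frac32}}^2$" plus "bounded constant times $\int\Vert v^{(k)}\Vert_{H^{\frac12}}^2$," and refer to Section~\ref{sec04} for the remaining identical steps.
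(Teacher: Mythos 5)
Your overall strategy is the same as the paper's: decompose $u_0=w_0+\bar w_0$ with the small part in $L^\infty_\omega H^{1/2}_x$ and the large part in $L^\infty_\omega H^1_x$, solve for $\bar w$ via \cite[Proposition~4.2]{GZ}, run the iterative scheme of Section~\ref{sec04} for the remainder equation \eqref{EQ25} with truncations, and close uniqueness by an energy estimate for the difference, using that $(H^{1/2},H^1)$ scales like $(L^3,L^6)$. The paper likewise only sketches this section and delegates the uniform energy estimates to \cite[Lemma~3.1]{AKX}.

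There is, however, one concrete point where your version would not deliver the stated theorem: your cutoffs $\psi_k$, $\phi_k$, $\psi_{\bar w}$ track only the instantaneous Sobolev norms $\Vert v^{(k)}(t)\Vert_{H^1}$, $\Vert v^{(k)}(t)\Vert_{H^{1/2}}$, $\Vert\bar w(t)\Vert_{H^1}$, whereas the paper's cutoffs (and the stopping times $\tau_k,\rho_k$) track the quantities $\Vert v^{(k)}(t)\Vert_{H^{s}}+\bigl(\int_0^t\Vert v^{(k)}(r)\Vert_{H^{s+1}}^2\,dr\bigr)^{1/2}$ for $s=\frac12,1$. This is not cosmetic. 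First, the theorem asserts a \emph{pathwise} ($\sup_\Omega$) bound on $\sup_t\Vert w\Vert_{H^{1/2}}^2+\int_0^\tau\Vert w\Vert_{H^{3/2}}^2\,dt$; an expectation estimate from the It\^o energy identity cannot be upgraded to a $\sup_\Omega$ bound on the dissipation integral because of the martingale term, so the only way to guarantee it is to build the accumulated dissipation into the stopping times, exactly as the pointwise $L^3$ bound \eqref{EQ305} is enforced through $\phi_k$ and \eqref{EQ80} in the $L^3$ case. Second, the uniqueness argument genuinely needs this. In the $L^3$ proof the term $I_{11}$ is controlled by $\epsilon^2\,\EE\int\Vert U\Vert_{L^9}^3$ using only the pointwise-in-time smallness of $\Vert w\Vert_{L^3}$; but in $H^{1/2}$ the analogous product estimates produce, after Young's inequality, terms of the form $\EE\int_0^{\kappa\wedge T}\Vert U\Vert_{H^{1/2}}^2\Vert w\Vert_{H^{3/2}}^2\,dt$, whose absorption into the left-hand side requires the \emph{pathwise} smallness of $\int_0^{\kappa\wedge T}\Vert w\Vert_{H^{3/2}}^2\,dt$ and not merely of $\sup_t\Vert w\Vert_{H^{1/2}}$ (in the critical Hilbert scaling, smallness of the sup norm alone does not control the dissipation integral pathwise in the stochastic setting). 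The same issue appears in your analog of Lemma~\ref{L09}: the quantity whose exceedance must trigger the cutoff is the sum of the sup norm and the dissipation integral, since it is this combined quantity that must remain small uniformly in $(\omega,t)$ for the uniform estimates of Lemma~\ref{L11} and for the passage to the limit. With the cutoffs corrected in this way, the rest of your outline (the product estimates $\Vert fg\Vert_{H^{1/2}}\lesssim\Vert f\Vert_{H^1}\Vert g\Vert_{H^{1/2}}$ replacing \eqref{EQ14}, the use of \eqref{EQ100} with $\delta=0,\frac12$ for the noise, and the Markov-inequality summation for $\PP(\tau_w>0)=1$) matches the paper's argument.
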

\colb

We utilize a similar decomposition as in Lemma~\ref{L05}, obtaining
\begin{align}
	\Vert w_0\Vert_{H^{\frac12}} \le \epsilon_0
	\andand
	\Vert \bar{w}_0\Vert_{H^{1}} \le K_0
	\Pas
	.\label{EQ103}
\end{align}
For $\bar{w}_0$, we consider the Navier-Stokes system 
\begin{align}
	\begin{split}
		&(\partial_t - \Delta)\bar{w} 
		=-\mathcal{P} 
		(\bar{w} \cdot \nabla \bar{w}) +\sigma(t,\bar{w})\dot{W}(t),
		\\
		&
		\nabla \cdot w = 0 \comma \bar{w}(0) =  \bar{w}_0,
		\text{ and }  \sup_\Omega\Vert \bar{w}_0\Vert_{H^1} \le K_0.
		\label{EQ104}
	\end{split} 
\end{align}
By~\cite[Proposition~4.2]{GZ}, it admits a unique strong solution
$(\bar{w},\tau_{\bar{w}})$ such that 
$\bar{w}\mathds{1}_{[0, \tau_{\bar{w}} ]}+\bar{w}(\tau_{\bar{w}})\mathds{1}_{(\tau_{\bar{w}}, \infty)} \in L^2(\Omega;C([0,T],H^{1}(\mathbb{T}^3))) \cap L^2(\Omega;L^2([0,T];H^{2}(\mathbb{T}^3)))$. We note that~\cite{GZ}
addresses the SNSE in bounded domains, but the proof applies to $\mathbb{T}^3$ without much change.
In addition, the solution $\bar{w}$ constructed in~\cite{GZ} need not attain the bound~\eqref{EQ178}. 
However, the energy estimate for this solution can be derived by suitably adjusting the arguments in~\cite{KXZ} and~\cite{AKX}.
\colb
Next, we consider a 
Navier-Stokes-type system \eqref{EQ25}
and set
\begin{align}
	u = \bar{w} + w
	.\llabel{EQ109}
\end{align}
To solve \eqref{EQ25}, 
we consider \eqref{EQ400}--\eqref{EQ405}
upon replacing $L^3$ by~$H^{1/2}$.
We also adjust the cutoffs by writing
\begin{equation}
	\begin{split}
	\psi_k
	&:=
	\theta\left( 
	 \frac{1}{M_k}\Vert v^{(k)}(t, \cdot)\Vert_{H^1}
	  +\frac{1}{M_k}\left(\int_0^t \Vert v^{(k)}(s, \cdot)\Vert_{H^2}^2\,ds\right)^\frac12\right)
	\\
	\phi_{k}
	&:=
	\theta\left(
	\frac{2^k}{\epsilon_1}
	\Vert v^{(k)}(t, \cdot)\Vert_{H^\frac12}
	 +\frac{2^k}{\epsilon_1}
	  \left(\int_0^t \Vert v^{(k)}(s, \cdot)\Vert_{H^\frac{3}{2}}^2\,ds\right)^\frac12
	\right)
	,
	\llabel{0cutoff1}
	\end{split}
\end{equation}
and
\begin{equation}
	\zeta_{k-1}
	:=\mathds{1}_{k=0}+\mathds{1}_{k>0}\Pi_{i=0}^{k-1}\psi_i
	\andand
	\psi_{\bar{w}}
	:=
	\theta\left( \frac{1}{K_1}\Vert v^{(k)}(t, \cdot)\Vert_{H^1}
	+\frac{1}{K_1}\left(\int_0^t \Vert v^{(k)}(s, \cdot)\Vert_{H^2}^2\,ds\right)^\frac12\right)
	\llabel{0cutoff2}
\end{equation}
where $\theta$ was given in Section~\ref{sec04}, $K_1>0$ is a sufficiently large constant,
and $M_k,~\epsilon_1>0$ are to be determined. Then,
upon utilizing~\cite[Lemma~3.1]{AKX},
we can establish the analogs of Lemmas~\ref{L10} and~\ref{L11}.
The rest of the argument follows by defining the stopping times
$\tau_{k}$ and $\rho_{k}$ 
as the first times when 
\begin{align}
	\Vert v^{(k)}(t)\Vert_{H^{\frac12}}+\left(\int_0^t\Vert v^{(k)}(s)\Vert_{H^{\frac{3}{2}}}^2\,ds\right)^\frac12
	\llabel{EQ417} 
\end{align}
and
\begin{align}
	\Vert v^{(k)}(t)\Vert_{H^{1}}+\left(\int_0^t\Vert v^{(k)}(s)\Vert_{H^{{2}}}^2\,ds\right)^\frac12
	\llabel{EQ418} 
\end{align}
hit $M_k$ and $\epsilon_1/2^k$, respectively. Finally, the uniqueness follows similarly to the proof of Theorem~\ref{T01}, since $H^{1/2}$ and $H^1$ has the same scaling properties as $L^3$ and $L^6$, respectively.

\colb
\section*{Acknowledgments}
\rm
MA and IK were supported in part by the NSF grant DMS-2205493.

\ifnum\sketches=1

\fi

\end{document}